\newcommand{\wtV}{\widetilde{V}}
\newcommand{\C}{{\mathbb C}}
\newcommand{\N}{{\mathbb N}}
\newcommand{\R}{{\mathbb R}}
\renewcommand{\S}{{\mathbb S}}
\newcommand{\Z}{{\mathbb Z}}
\newcommand{\cA}{{\mathcal A}}
\newcommand{\cB}{{\mathcal B}}
\newcommand{\cC}{{\mathcal C}}
\newcommand{\cH}{{\mathcal H}}
\newcommand{\cI}{{\mathcal I}}
\newcommand{\cL}{{\mathcal L}}
\newcommand{\cM}{{\mathcal M}}
\newcommand{\cN}{{\mathcal N}}
\newcommand{\cO}{{\mathcal O}}
\newcommand{\cP}{{\mathcal P}}
\newcommand{\cR}{{\mathcal R}}
\newcommand{\cT}{{\mathcal T}}
\newcommand{\cV}{{\mathcal V}}
\newcommand{\cZ}{{\mathcal Z}}
\renewcommand{\d}{\partial}
\newcommand{\norm}[1]{\left\| #1 \right\|}
\def\sleq{\lesssim}
\def\sgeq{\gtrsim}
\def\di{{\rm d}}
\newcommand{\dip}[2]{ \frac{\partial #1}{\partial #2} }
\newcommand{\sI}{\mathscr{I}}
\newcommand{\bj}{{\boldsymbol{j}}}
\newcommand{\bk}{{\boldsymbol{k}}}
\newcommand{\bl}{{\boldsymbol{l}}}
\newtheorem{theorem}{Theorem}[section]
\newtheorem{lemma}[theorem]{Lemma}
\newtheorem{corollary}[theorem]{Corollary}
\newtheorem{proposition}[theorem]{Proposition}
\newtheorem{definition}[theorem]{Definition}
\newtheorem{remark}[theorem]{Remark}
\title{A Nekhoroshev type theorem for the nonlinear Klein-Gordon equation with potential}
\author{
 S. Pasquali \footnote{Dipartimento di Matematica, Universit\`a degli Studi di Milano, Via Saldini 50, 20133 Milano. \newline
 \textit{Email: } \texttt{stefano.pasquali@unimi.it}}
}
\begin{document}

\maketitle

\begin{abstract}
We study the one-dimensional nonlinear Klein-Gordon (NLKG) equation 
with a convolution potential, and we prove that solutions 
with small analytic norm remain small for exponentially long times. 
The result is uniform with respect to $c \geq 1$, 
which however has to belong to a set of large measure. \\
\emph{Keywords}: Nekhoroshev theorem, nonlinear Klein-Gordon equation \\
\emph{MSC2010}: 35Q40, 37K45, 37K55
\end{abstract}

\section{Introduction} \label{intro}

In this paper we study the real one-dimensional nonlinear Klein-Gordon (NLKG) 
equation with a convolution potential on the segment,
\begin{align} \label{NLKGpot}
\frac{1}{c^2} \; u_{tt} \; - \; u_{xx} \; + \; c^2 \; u \; + V \ast u + f(u) &= 0,
\end{align}
with $c \in [1,+\infty)$, $x \in I:=[0,\pi]$, 
$f:\R\to\R$ an analytic function with a zero of order $3$ at the origin, 
in the case of Dirichlet boundary conditions. 

In this paper we show that the technique developed in \cite{faou2013nekhoroshev}
 allows us to deduce that solutions with small initial data that are analytic 
in a strip of width $\rho>0$ remain analytic in a strip of width $\rho/4$ for a 
timescale which is exponentially long with respect to the size of the initial 
datum; however, we have to assume that both the parameter $c$ 
and the coefficients of the potential belong to a set of large measure. \\

In \cite{pasquali2017longII} we proved an almost global existence result 
uniform with respect to $c \geq 1$ for the NLKG with a convolution potential. 
Furthermore, we deduced that for any $\delta>0$ any solution in $H^s$ 
with initial datum of size $\cO(c^{-\delta})$ remains of size $\cO(c^{-\delta})$ 
up to times of order $\cO(c^{\delta(r+1/2)})$ for any $r \geq 1$. 
Here we use normal form techniques in order to establish a result valid 
for exponentially long times, but we have to use analytic norms instead 
of Sobolev ones. 

The issue of long-time existence for small solutions of the NLKG 
equation on compact manifolds has been quite studied; see for example 
\cite{delort2004long}, \cite{bambusi2007almost}, \cite{delort2009long}, 
\cite{fang2010long}, \cite{fang2017almost} and \cite{delort2017long}. 
However, all results in the aforementioned papers rely on a nonresonance 
condition which is not uniform with respect to $c$. We also point out that 
the almost global existence for small solutions has been established only 
for the segment $[0,\pi]$ and in the case of Zoll manifolds, 
such as the multidimensional spheres $\S^d$, $d \geq 1$. \\

The proof combines the argument of \cite{faou2013nekhoroshev} for the NLS 
equation on the torus with a diophantine type estimate for the linear 
frequencies which holds uniformly for $c \geq 1$. We mention that a diophantine 
estimate for the frequencies uniform with respect to $c$ has been already used 
in \cite{pasquali2017longII} in order to prove the almost global existence.

A further aspect that would deserve future work is the study of 
Nekhoroshev estimates for the NLKG without potential. This is expected
to be a quite subtle problem, since for $c \neq 0$ the frequencies of
NLKG are typically non resonant, while the limiting frequencies are
resonant. \\ 

The paper is organized as follows.
In sect. \ref{results} we state the results of the paper, together with 
some examples and comments. In sect. \ref{HamSetting} we introduce the 
notations and the spaces which we use for our result. In sect. \ref{PolSec} 
we define a special class of polynomials. In sect. \ref{nonlinsec} we show 
that the nonlinearity appearing in the NLKG equation belongs to that class. 
In sect. \ref{nonresSec} we study the resonances of the system. 
In sect. \ref{BNFSec} we introduce the notion of normal form, and in the last 
section we prove the main theorem.

\section{Statement of the Main Results} \label{results}

In \eqref{NLKGpot} we assume that the potential has the form 
\begin{align} \label{coeffpot}
V(x) &= \sum_{k \geq 1} \; v_k \; \cos(kx).
\end{align}

\indent By using the same approach of \cite{faou2013nekhoroshev}, 
we fix a positive $s$, and for any $M>0$ we consider the probability space
\begin{align} \label{probspace}
\cV := \cV_{s,M} &= \left\{ (v_k)_{k\geq1} \; : \; v'_k \; := \; M^{-1} (1+|k|)^{s} v_k \in \left[-\frac{1}{2},\frac{1}{2}\right] \right\},
\end{align}
and we endow the product probability measure on the space of $(c,(v'_k)_k)$.

It is well known that \eqref{NLKGpot} is Hamiltonian with Hamiltonian 
\begin{align} \label{HamNLKGpot}
H(v,u) &= \int_I \frac{c^2 |v(x)|^2 + |u_x(x)|^2+c^2|u(x)|^2 + (V \ast u)(x)u(x) }{2} \di x + \int_I F(u) \di x,
\end{align}
where $v:=u_t/c^2$ is the momentum conjugated to $u$, and $F(u)$ is such that 
$\d_uF=f$. Consider now the Sturm Liouville problem 
\begin{align}
-\d_{xx} \phi_k + V \ast \phi_k &= \lambda_k \phi_k, \label{SturmLio}
\end{align}
with Dirichlet boundary conditions on $I$: 
it is well known that all the eigenvalues are distinct, 
that the solutions $(\phi_k)_{k \geq 1}$ of \eqref{SturmLio} 
given by $\phi_k(x)= \pi^{-1/2} \sin(kx)$ form an orthonormal basis of $L^2(I)$. 
It is useful to expand both $u$ and $v$ with respect to $(\phi_k)_{k \geq 1}$,
\begin{equation} \label{exp1}
\begin{cases}
u(t,x) &= \sum_{k \geq 1} q_k(t) \phi_k(x), \\
v(t,x) &= \sum_{k \geq 1} p_k(t) \phi_k(x), \\
\end{cases}
\end{equation}
and to introduce the following change of variables 
\begin{align}
\xi_k &:= \frac{1}{\sqrt{2}} \left[ q_k \left( \frac{ \sqrt{c^2+\lambda_k} }{c} \right)^{1/2}-i p_k \left( \frac{c}{ \sqrt{c^2+\lambda_k} } \right)^{1/2} \right], \; \; k \geq 1, \label{Fseries1} \\
\eta_k &:= \frac{1}{\sqrt{2}} \left[ q_k \left( \frac{ \sqrt{c^2+\lambda_k} }{c} \right)^{1/2}+i p_k \left( \frac{c}{ \sqrt{c^2+\lambda_k} } \right)^{1/2} \right], \; \; k \geq 1. \label{Fseries2}
\end{align}
where $\lambda_k = k^2 + v_k$. 
Indeed, in the coordinates $(\xi,\eta):=((\xi_k)_{k\geq1},(\eta_k)_{k \geq 1})$ 
the Hamiltonian \eqref{HamNLKGpot} takes the form 
\begin{align} \label{HamNLKGpotnew}
H(\xi,\eta) &= H_0(\xi,\eta)+N(\xi,\eta),
\end{align}
where
\begin{align}
H_0(\xi,\eta) &:= \sum_{k \geq 1} \omega_k \xi_k\eta_k, \label{H0} \\
N(\xi,\eta) &:= \int_I F \left( 
\sum_{k \geq 1} \left( \frac{c}{ \sqrt{c^2+\lambda_k} } \right)^{1/2} \frac{\xi_k+\eta_k}{\sqrt{2}} \phi_k(x) 
\right) \di x, \label{Nlin} 
\end{align}
where the linear frequencies $(\omega_k)_{k\geq1}$ of \eqref{HamNLKGpotnew} are 
given by 
\begin{align} \label{freq}
\omega_k := \omega_k(c) &= c \sqrt{c^2+\lambda_k} \; = \; c^2 \; + \frac{\lambda_k}{1+\sqrt{1+\lambda_k/c^2} } \\
&= \; c^2 \; + \frac{\lambda_k}{2} -\frac{\lambda_k^2}{2c^2} \frac{1}{(1+\sqrt{1+\lambda_k/c^2})^2 }.
\end{align}

Equation \eqref{NLKGpot} is a semilinear PDE locally well-posed in the energy 
space $H^1(I) \times L^2(I)$ (see \cite{nakanishi2011invariant}, ch. 2.1). 
Consider a local solution $(u(t),c^2 v(t))$ of \eqref{NLKGpot}: a standard 
computations shows that 
$(u,c^2 v) \in H^1(I) \times L^2(I)$ if and only if 
$(\xi,\eta) \in l_2^{1/2}(I) \times l_2^{1/2}(I)$ solves the following system 
\begin{equation} \label{FSys}
\begin{cases}
-i \dot\xi_k &= \omega_k \xi_k + \frac{\di N}{\di \eta_k}, \\
i \dot\eta_k &= \omega_k \eta_k + \frac{\di N}{\di \xi_k}.
\end{cases}
\end{equation}
Moreover, we will denote by $\psi$ and $\bar\psi$ the functions given by 
the following expansions with respect to the eigenfunctions $(\phi_k)_{k \geq 1}$,
\begin{align}
\psi(t,x) &:= \sum_{k \geq 1} \xi_k(t) \phi_k(x), \label{psi} \\
\bar\psi(t,x) &:= \sum_{k \geq 1} \eta_k(t) \phi_k(x). \label{barpsi}
\end{align}
It is easy to check that $(u,c^2v) \in H^1 \times L^2$ solve equation 
\eqref{NLKGpot} if and only if $(\psi,\bar\psi) \in H^{1/2} \times H^{1/2}$ 
solve the following equation,
\begin{align} \label{NLKGpot2}
-i\psi_t &= c (c^2-\Delta+\wtV)^{1/2}\psi + 
\frac{1}{\sqrt{2}} \left(\frac{c}{ (c^2-\Delta+ \wtV)^{1/2} }\right)^{1/2} \; 
f \left( \left(\frac{c}{ (c^2-\Delta+ \wtV)^{1/2} }\right)^{1/2} \frac{\psi+\bar\psi}{\sqrt{2}} \right),
\end{align}
where $\wtV$ is the operator that maps $\psi$ to $V \ast \psi$. \\

Now, for $\rho>0$ we denote by $\cA_\rho:=\cA_\rho(I,\C \times \C)$ the space of 
functions that are analytic on the complex neighborhood of $I$ given by 
\begin{align*}
I_\rho := \{ w=x+iy : x \in I, y \in \R, |y| < \rho \},
\end{align*}
and continuous on the closure of this strip. 
The space $\cA_\rho$, endowed with the following norm, 
\begin{align*}
|(\varphi_1,\varphi_2)|_\rho:= \sup_{w \in I_\rho} |\varphi_1(w)|+ |\varphi_2(w)|, \; \; \forall (\varphi_1,\varphi_2) \in \cA_\rho,
\end{align*}
is a Banach space.

Our main result is the following theorem:

\begin{theorem} \label{NekhThm}
Consider the equation \eqref{NLKGpot}. 
For any positive $\beta<1$ and for any $\rho>0$ the following holds: 
there exist $\gamma>0$, $\tau>0$, and a set 
$\cR_\gamma:=\cR_{\gamma,\tau,M} \subset \; [1,+\infty) \times \cV$ 
satisfying 
\begin{align*}
|\cR_{\gamma} \cap ([n,n+1] \times \cV)| &= \cO(\gamma) \; \; \forall n \in \N_0,
\end{align*}
such that for any 
$(c,(v_k)_k) \in ([1,+\infty) \times \cV)\setminus \cR_{\gamma,\tau,R}$ 
there exist $K>0$ and a sufficiently small $R_0>0$ such that, if
\begin{align*}
(\psi_0,\bar\psi_0) \in \cA_\rho, \; &\; |(\psi_0,\bar\psi_0)|_\rho=R<R_0,
\end{align*}
then the solution of \eqref{NLKGpot} with initial datum $(\psi_0,\bar\psi_0)$
exists for times $|t| \leq e^{-\sigma_\rho |\log R|^{\beta+1} }$, 
$\sigma_\rho=\min(1/8,\rho/4)$, and satisfies 
\begin{align}
|(\psi(t),\bar\psi(t))|_{\rho/4} &\leq K \; R, \; \; |t| \leq e^{-\sigma_\rho |\log R|^{\beta+1} }. \label{expest1}
\end{align}
Furthermore, we have that
\begin{align}
\sum_{k \geq 1} e^{\rho |k|} \left| |\xi_k(t)|-|\xi_k(0)| \right| \leq R^{3/2}, \; &\; |t| \leq e^{-\sigma_\rho |\log R|^{\beta+1} }. \label{expest2}
\end{align}
\end{theorem}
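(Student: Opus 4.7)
The plan is to implement the Birkhoff normal form scheme of \cite{faou2013nekhoroshev} for the Hamiltonian $H = H_0 + N$ in the coordinates $(\xi,\eta)$, using the polynomial class of Section \ref{PolSec} and the uniform-in-$c$ non-resonance estimates of Section \ref{nonresSec}. For each order $r \geq 1$ I would construct an analytic symplectic transformation $\tau_r$, close to the identity on a slightly smaller complex neighbourhood of $I$, such that $H\circ\tau_r = H_0 + Z_r + R_r$, where $Z_r$ is in Birkhoff normal form (so $\{Z_r,H_0\}=0$ and hence $Z_r$ Poisson--commutes with every action $\xi_k\eta_k$), and $R_r$ is a small remainder of polynomial degree at least $r+3$ plus an analytic tail.

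The first technical ingredient is the isolation of the resonant set $\cR_\gamma\subset [1,+\infty)\times\cV$. For a finite integer combination $\ell=\alpha-\beta$ with $\ell\neq 0$, the homological equation at each step requires a lower bound of the form $|\omega(c)\cdot\ell|\gtrsim \gamma/\kappa(\ell)^\tau$, with $\kappa(\ell)$ a polynomial weight on the support of $\ell$. From the expansion $\omega_k=c^2+\lambda_k/2+\cO(c^{-2})$, combinations with $\sum_i\ell_i\neq 0$ produce a surviving $c^2$ term that dominates trivially, while for $\sum_i\ell_i=0$ the $c^2$ contribution cancels and one must appeal to a Diophantine-type condition on $(c,(v_k)_k)$. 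I would define $\cR_\gamma$ as the set on which this condition fails, and verify by a standard measure-theoretic argument that its intersection with each strip $[n,n+1]\times\cV$ has measure $\cO(\gamma)$ uniformly in $n$.

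The iterative normal-form lemma would then provide estimates on $Z_r$ and $R_r$ in the polynomial class of Section \ref{PolSec}, of schematic form $|X_{R_r}|_{\rho_r}\lesssim (C r^{\tau+1}/\gamma)^r R^{r+2}$ on a decreasing sequence of analytic radii $\rho_r \downarrow \rho/4$, for $R:=|(\psi_0,\bar\psi_0)|_\rho$ sufficiently small. Optimizing in $r$ by a choice of the form $r\sim|\log R|^\beta$ with $\beta<1$ turns this algebraic bound into the exponentially small remainder $e^{-\sigma_\rho|\log R|^{\beta+1}}$. A bootstrap argument applied to the flow of the transformed Hamiltonian, which modulo the small remainder acts diagonally on the actions, then yields \eqref{expest1} on the required time scale. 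The approximate action conservation \eqref{expest2} follows from $\{Z_r,\xi_k\eta_k\}=0$: only $\{R_r,\xi_k\eta_k\}$ drives $\frac{d}{dt}|\xi_k|^2$, and summation with the weights $e^{\rho|k|}$ together with the smallness of $R_r$, integrated over $|t|\leq e^{-\sigma_\rho|\log R|^{\beta+1}}$, comfortably gives the $R^{3/2}$ bound.

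The main obstacle is the uniform-in-$c$ non-resonance estimate. Naive measure arguments on $\cV$ give bounds depending on $c$ through the spectrum, and controlling this dependence requires a careful separation of the regimes $k\lesssim c$ and $k\gg c$, together with a refined analysis of how the $c^2$ contribution interacts with the potential perturbation in the homological equation. Without this uniformity the normal form construction, and hence the exponential time scale, would degenerate as $c$ grows, defeating the uniform-in-$c$ statement of the theorem.
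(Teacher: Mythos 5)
Your plan rests on producing, at each order, a normal form $Z_r$ with $\{Z_r,H_0\}=0$ whose non-remainder part commutes with every action, and then deducing \eqref{expest2} from $\{Z_r,\xi_k\eta_k\}=0$. This step would fail, and fixing it is precisely the key idea of the paper that is missing from your proposal. The non-resonance estimate that can actually be proved uniformly in $c$ (Theorem \ref{nonrescondthm}) bounds $|\Omega(\bj)|$ from below only for multi-indices whose \emph{third largest} entry $\mu(\bj)$ is at most $N$; for monomials carrying three or more modes of index larger than $N$ no usable small-divisor bound is available, so they cannot be eliminated by the homological equation. The paper therefore works with an $N$-normal form (Definition \ref{NFdef}) which \emph{retains} these monomials alongside the action-dependent ones; such a $Z$ does not Poisson-commute with the actions. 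Its dynamical harmlessness is instead established by the estimates \eqref{estanrem}--\eqref{estandata}: since every retained non-integrable monomial contains at least two factors indexed above $N$, its contribution to the evolution of both $\norm{z}_\rho$ and the high-mode tail $\cR^N_\rho(z)$ is quadratic in $\cR^N_\rho(z)$, which is of size $\cO(Re^{-N\rho})$ and remains so for times $\cO(e^{N\rho})$ by a bootstrap. Both \eqref{expest1} and the almost-conservation \eqref{expest2} are obtained this way, not from exact commutation; with your scheme the homological equations would be unsolvable (or the estimates would blow up) exactly on the three-large-index monomials.

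A second gap is in the non-resonance discussion itself. Your claim that combinations with $\sum_i\ell_i\neq 0$ are handled trivially because the surviving $c^2$ term dominates is incorrect: since one or two external frequencies $\omega_l\sim c\,\lambda_l^{1/2}$ may be arbitrarily large, $\alpha c^2$ can be exactly cancelled, e.g.\ near $c^2=\lambda_l/(\alpha(\alpha+2))$; this is the content of the case analysis in Proposition \ref{cond1mel} and Theorem \ref{cond2mel}, and it is the reason the excised set $\cR_\gamma$ must remove values of $c$ (so the result holds only for most $(c,(v_k)_k)$, unlike the full-measure-in-potential statement of Faou--Gr\'ebert). You correctly flag the uniform-in-$c$ estimate as the main obstacle, but the sketch you give of how to overcome it would not go through, and without it neither the measure estimate on $\cR_\gamma$ nor the uniform exponential time scale is obtained.
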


\begin{remark}
In finite dimension $n$, the standard Nekhoroshev theorem controls the dynamics 
over timescales of order $exp(- \alpha R^{-1/(\tau+1)})$ for some $\alpha>0$ and 
for some $\tau>n-1$ (see \cite{nekhoroshev1977exponential} and 
\cite{benettin1985proof}; see also \cite{lochak1992canonical} and 
\cite{poschel1993nekhoroshev} for a more direct proof in the convex and 
quasi-convex case respectively). 

In the infinite-dimensional context there are only few results, mainly due to 
Bambusi and P\"oschel in the one-dimensional case 
(see \cite{bambusi1998property}, \cite{bambusi1999nekhoroshev}, 
\cite{bambusi1999long}, \cite{poschel1999nekhoroshev} and \cite{bambusi2002long}), and by Faou and Grébert in the multidimensional case (see \cite{faou2013nekhoroshev}). 
In particular, in \cite{bambusi1999long} Bambusi proved a Nekhoroshev result 
for the one-dimensional NLKG: he was able to control the dynamics of analytic 
solutions in a strip on a timescale of order $\cO( e^{-\alpha |\log R|^{\beta+1} } )$ 
for some $\alpha>0$ and $\beta<1$ (which is the same timescale we cover in 
Theorem \ref{NekhThm}), but assuming that the parameter that appears in 
the equation ranges over a compact interval. 
\end{remark}

\begin{remark}
Actually, our result is slightly different from the one obtained by 
Faou ang Grébert in \cite{faou2013nekhoroshev}: indeed, while they proved that 
there exists a full measure set of potentials for which each solution of the 
NLS equation corresponding to an initial datum with small analytic norm remains 
small for exponentially long times, in our result 
we prove that for ``most'' of the values of speeds of light and potentials 
each solution of \eqref{NLKGpot} corresponding to an initial datum with small 
analytic norm remains small for exponentially long times. 

Such a difference is motivated by the non resonance condition we prove below 
(see Theorem \ref{nonrescondthm}), which is an adaptation of the uniform 
diophantine estimates reported in \cite{pasquali2017longII}.
\end{remark}

By exploiting the same argument used to prove Theorem \ref{NekhThm} 
one can immediately deduce the following stability result 
for solutions with small (with respect to $c$) initial data.

\begin{corollary}
Consider the equation \eqref{NLKGpot} and fix $\delta>0$. 
Then then there exist $\gamma>0$, $\tau>1$, and a set 
$\cR_\gamma:=\cR_{\gamma,\tau,M} \subset \; [1,+\infty) \times \cV$ 
satisfying 
\begin{align*}
|\cR_{\gamma} \cap ([n,n+1] \times \cV)| &= \cO(\gamma) \; \; \forall n \in \N_0,
\end{align*}
such that for any 
$(c,(v_k)_k) \in ([1,+\infty) \times \cV)\setminus \cR_{\gamma,\tau}$, 
for any positive $\beta<1$ and for any $\rho>0$ the following holds: 
there exist $K>0$ and $c^\ast>0$ such that, if $c>c^\ast$ and
\begin{align*}
(\psi_0,\bar\psi_0) \in \cA_\rho, \; &\; \|(\psi_0,\bar\psi_0)\|_\rho=\frac{1}{c^\delta},
\end{align*}
then the solution of \eqref{NLKGpot} with initial datum $(\psi_0,\bar\psi_0)$ 
the solution of \eqref{NLKGpot} with initial datum $(\psi_0,\bar\psi_0)$ 
exists for times $|t| \leq e^{-\sigma_\rho |\delta \log c|^{\beta+1} }$, 
$\sigma_\rho=\min(1/8,\rho/4)$, and satisfies 
\begin{align}
\|(\psi(t),\bar\psi(t))\|_{\rho/4} &\leq \frac{K}{c^\delta}, \; \; |t| \leq e^{-\sigma_\rho |\delta \log c|^{\beta+1} }. \label{expest1cor}
\end{align}
\end{corollary}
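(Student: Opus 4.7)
The plan is to derive the corollary as an immediate specialization of Theorem \ref{NekhThm}, obtained by substituting the amplitude $R = c^{-\delta}$ and requiring $c$ to be large enough that this lies below the threshold $R_0$ of the theorem.

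First I would fix $\beta \in (0,1)$ and $\rho > 0$ as in the statement and apply Theorem \ref{NekhThm} with these parameters. This produces constants $\gamma > 0$, $\tau > 0$, a resonant set $\cR_{\gamma,\tau,M} \subset [1,+\infty) \times \cV$ satisfying the measure estimate $|\cR_\gamma \cap ([n,n+1] \times \cV)| = \cO(\gamma)$ for every $n \in \N$, and positive constants $K$, $R_0$ governing the exponential stability \eqref{expest1} on the complementary set. I simply take the resonant set of the corollary to coincide with this $\cR_{\gamma,\tau,M}$, so the measure bound is inherited verbatim.

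Next, I define $c^\ast := \max(1, R_0^{-1/\delta})$, which is finite since $R_0$ depends only on $\beta$ and $\rho$ (and on the chosen pair $(c,(v_k)_k)$ outside the resonant set), not on $\delta$. For any $c > c^\ast$ the amplitude $R := c^{-\delta}$ satisfies $R < R_0$, so any initial datum $(\psi_0,\bar\psi_0) \in \cA_\rho$ with $|(\psi_0,\bar\psi_0)|_\rho = c^{-\delta}$ falls within the hypotheses of Theorem \ref{NekhThm}. That theorem then yields existence of the solution together with the bound $|(\psi(t),\bar\psi(t))|_{\rho/4} \leq K R = K c^{-\delta}$ on the timescale $|t| \leq \exp(\sigma_\rho |\log R|^{\beta+1})$, with $\sigma_\rho = \min(1/8, \rho/4)$.

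Finally, I re-express the time bound in terms of $c$: since $|\log R| = |\log c^{-\delta}| = \delta \log c$, one has $|\log R|^{\beta+1} = |\delta \log c|^{\beta+1}$, which is precisely the timescale stated in \eqref{expest1cor}. There is no genuine obstacle here, as the argument is a pure substitution; the only book-keeping to check is that the threshold $R_0$ and the constant $K$ furnished by Theorem \ref{NekhThm} are independent of $\delta$, which is automatic because $\delta$ enters only through the a posteriori choice $R = c^{-\delta}$.
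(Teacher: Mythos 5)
Your argument is correct and is exactly what the paper intends: the paper gives no separate proof, saying only that the corollary follows ``by exploiting the same argument used to prove Theorem \ref{NekhThm}'', i.e.\ the substitution $R=c^{-\delta}$, with $c>c^\ast$ chosen so that $R<R_0$ and the timescale rewritten via $|\log R|=\delta\log c$. The one point to state more carefully is your parenthetical allowing $R_0$ (and $K$) to depend on the fixed pair $(c,(v_k)_k)$: with that dependence the definition $c^\ast=\max(1,R_0^{-1/\delta})$ would be circular and the hypothesis ``$c>c^\ast$'' essentially vacuous, so you should instead invoke the fact, central to the paper, that the nonresonance condition and the bounds on the nonlinearity are uniform in $c\geq 1$, hence $R_0$ and $K$ can be taken to depend only on $\rho$, $\beta$, $\gamma$, $\tau$ and the potential, not on $c$.
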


\section{Hamiltonian setting} \label{HamSetting}

In the following we set $\cZ:= \N_0 \times \{\pm 1\}$, $\N_0=\{1,2,3,\ldots\}$; 
for any $j=(k,\delta) \in \cZ$ we define $|j|:=|k|$, and we denote by $\bar j$ 
the index $(k,-\delta)$. We also identify a couple $(\xi,\eta)$ with 
$(z_j)_{j \in \cZ}$, where 
\begin{align}
j=(k,\delta) \in \cZ &\rightarrow 
\begin{cases}
z_j= \xi_k & \text{if } \delta=1,\\
z_j= \eta_k & \text{if } \delta=-1.
\end{cases}
\end{align}
By a slight abuse of notation we will often denote by $z=(z_j)_{j \in \cZ}$ 
such an element. The above system \eqref{FSys} may be regarded as an 
infinite-dimensional Hamiltonian system with coordinates 
$(\xi_k,\eta_k)_{k \geq 1} \in \C^\cZ \times \C^\cZ$ and
symplectic structure 
\begin{equation} \label{symplform}
i \sum_{k \geq 1} \di\xi_k \wedge \di\eta_k.
\end{equation}

For a given $\rho>0$, we also consider the Banach space $\cL_\rho$ whose elements are those $z \in \C^\cZ$ such that 
\begin{align*}
\norm{z}_\rho &:= \sum_{j \in\cZ} e^{\rho|j|} |z_j| \; < \; +\infty.
\end{align*}
We say that $z \in \cL_\rho$ is \emph{real} if $z_{\bar j} = \bar{z_j}$ 
for any $j \in \cZ$.

\begin{lemma} \label{anlemma}
Let $\psi$, $\bar\psi$ be complex valued functions analytic on a neighborhood 
of $I$, and let $(z_j)_{j \in \cZ}$ be the sequence defined by \eqref{psi} and 
\eqref{barpsi}. Then for all $\mu < \rho$ we have 
\begin{enumerate}
\item if $(\psi,\bar\psi) \in \cA_\rho$, then $z \in \cL_\mu$, and 
$\|z\|_\mu \leq K_{\rho,\mu} |(\psi,\bar\psi)|_\rho$; \\
\item if $z \in \cL_\rho$ then $(\psi,\bar\psi) \in \cA_\mu$, and 
$|(\psi,\bar\psi)|_\rho \leq K_{\rho,\mu} \|z\|_\mu$,
\end{enumerate}
where $K_{\rho,\mu}$ is a positive constant depending only on $\rho$ and $\mu$.
\end{lemma}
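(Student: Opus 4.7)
Plan. Both directions follow directly from the explicit form of the eigenfunctions $\phi_k(x)=\pi^{-1/2}\sin(kx)$, whose analytic continuation satisfies the elementary pointwise bound
\begin{align*}
|\phi_k(x+iy)| \;=\; \pi^{-1/2}|\sin(k(x+iy))| \;\leq\; \pi^{-1/2} e^{k|y|}, \qquad x+iy\in\C,
\end{align*}
obtained from $\sin(kz)=(e^{ikz}-e^{-ikz})/(2i)$. This is the only non-trivial ingredient beyond standard contour-shift arguments.

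For the easy synthesis direction (item 2), I would take $z\in\cL_\rho$ and consider the series defining $\psi,\bar\psi$ in \eqref{psi}, \eqref{barpsi} as functions of $w\in\C$. Using the bound above, for any $\mu<\rho$ and any $w=x+iy\in\overline{I_\mu}$,
\begin{align*}
|\psi(w)|+|\bar\psi(w)| \;\leq\; \pi^{-1/2}\sum_{j\in\cZ} e^{\mu|j|}|z_j| \;\leq\; \pi^{-1/2}\|z\|_\rho.
\end{align*}
Each partial sum is entire, and the uniform convergence on $\overline{I_\mu}$ gives analyticity of the limit on $I_\mu$ and continuity on its closure; hence $(\psi,\bar\psi)\in\cA_\mu$ with the asserted estimate (the printed inequality appears to have $\mu$ and $\rho$ inverted; I read it as $|(\psi,\bar\psi)|_\mu\leq K_{\rho,\mu}\|z\|_\rho$).

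For the analysis direction (item 1), the coefficients are recovered by orthonormality as $\xi_k=\int_0^\pi\psi(x)\phi_k(x)\,dx$, and similarly for $\eta_k$. My strategy is to identify $\psi$ with the restriction to $[0,\pi]$ of its odd $2\pi$-periodic analytic extension and then apply the standard Fourier argument on the torus $[-\pi,\pi]$: writing
\begin{align*}
\xi_k \;=\; \frac{\pi^{-1/2}}{2}\int_{-\pi}^{\pi}\psi_{\text{odd}}(x)\sin(kx)\,dx,
\end{align*}
splitting $\sin(kx)=(e^{ikx}-e^{-ikx})/(2i)$, and deforming the contour by Cauchy's theorem upward by $\mu'$ in the first term and downward by $\mu'$ in the second, for any $\mu<\mu'<\rho$. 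The vertical side contributions at $\pm\pi$ cancel by $2\pi$-periodicity, while each horizontal segment contributes at most $Ce^{-k\mu'}|(\psi,\bar\psi)|_\rho$. Summing with the weight $e^{k\mu}$ gives a geometric series of ratio $e^{\mu-\mu'}<1$, yielding
\begin{align*}
\|z\|_\mu \;\leq\; 2C|(\psi,\bar\psi)|_\rho \sum_{k\geq1} e^{-k(\mu'-\mu)} \;=:\; K_{\rho,\mu}|(\psi,\bar\psi)|_\rho.
\end{align*}

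I expect the main obstacle to be the identification of $\psi\in\cA_\rho$ with an odd periodic analytic function on the full strip $\{|\Im w|<\rho\}$, since this would nominally require compatibility conditions at the endpoints of $I$. In the Dirichlet setting of this paper $\psi$ is obtained as the sum of its sine series, so this identification is implicit; alternatively, one can perform the contour shift directly on $[0,\pi]$ and control the two vertical side terms at $\Re w = 0, \pi$ by iterated integration by parts combined with the Cauchy estimate $\|\psi^{(N)}\|_{L^\infty(I_{\mu''})}\leq N!(\rho-\mu'')^{-N}|(\psi,\bar\psi)|_\rho$ and an optimization $N\asymp k$, which reproduces exponential decay with a slightly degraded constant. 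Once the endpoint contributions are handled, the rest of the argument is a routine summation.
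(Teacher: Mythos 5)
Your proposal is correct and takes essentially the same route as the paper: item 1 is the Cauchy/Paley--Wiener contour-shift estimate $|z_j|\lesssim e^{-\rho'|j|}\,|(\psi,\bar\psi)|_\rho$ followed by summing a geometric series, and item 2 is the pointwise bound $|\phi_k(x+iy)|\le \pi^{-1/2}e^{k|y|}$ plus summation of the series; the paper compresses the first step into the phrase ``by Cauchy formula'' and, like you, implicitly uses that $\psi,\bar\psi$ are sums of their sine series, so the odd $2\pi$-periodic extension is analytic and the endpoint contributions you worry about are absent. Your reading of the misprinted inequality in item 2 (it should be $|(\psi,\bar\psi)|_\mu\le K_{\rho,\mu}\|z\|_\rho$) matches what the paper's own proof establishes.
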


\begin{proof}
Assume that $(\psi,\bar\psi) \in \cA_\rho$: then by Cauchy formula we have that 
for all $j \in \cZ$ 
\begin{align*}
|z_j| &\leq e^{-\rho|j|} |(\psi,\bar\psi)|_\rho.
\end{align*}
Hence for $\mu < \rho$ we have 
\begin{align*}
\|z\|_\mu \leq |(\psi,\bar\psi)|_\rho \sum_{j \in \cZ} e^{(\mu-\rho) |j|} &\leq 
|(\psi,\bar\psi)|_\rho \; 2 \sum_{k \geq 1} e^{(\mu-\rho) k} 
\leq \frac{ 2 e^{\mu-\rho} }{1-e^{\mu-\rho}} |(\psi,\bar\psi)|_\rho.
\end{align*}
Conversely, assume that $z \in \cL_\rho$: then 
$|\xi_k| \leq e^{-\rho k} \|(\psi,\bar\psi)\|_\mu$, and thus by \eqref{psi} we get 
that for all $x \in I$ and $y \in \R$ with $|y| \leq \mu$ 
\begin{align*}
|\psi(x+iy)| + |\bar\psi(x+iy)| \leq \sum_{k \in \cZ} |z_k| e^{k|y|} &\leq 
\|z\|_\rho \; 2 \sum_{k \geq 1} e^{-(\rho-\mu)k} 
\leq \frac{ 2 e^{\mu-\rho} }{1-e^{\mu-\rho}} \|z\|_\rho.
\end{align*}
hence $\psi$ and $\bar\psi$ are bounded on the strip $I_\mu$.
\end{proof}

Consider a function $G \in C^1(\cL_\rho,\C)$ we define its Hamiltonian vector 
field by $X_G:= J \nabla G$, where $J$ is the symplectic operator on $\cL_\rho$ 
induced by the symplectic form \eqref{symplform}, and 
$\nabla G(z) := \left( \frac{\d G}{\d z_j} \right)_{j \in \cZ}$, 
where for $j=(k,\delta) \in \cZ$ 
\begin{align*}
\frac{\d G}{\d z_j} &=
\begin{cases}
\frac{\d G}{\d \xi_k} & \text{if} \; \; \delta=1, \\
\frac{\d G}{\d \eta_k} & \text{if} \; \; \delta=-1.
\end{cases}
\end{align*}

\begin{definition}
For a given $\rho>0$ we denote by $\cH_\rho$ the space of real Hamiltonians $G$ 
satisfying
\begin{align*}
G &\in C^1(\cL_\rho,\C), \\
X_G &\in C^1(\cL_\rho,\cL_\rho).
\end{align*}
Let $G_1$, $G_2 \in H_\rho$, then we define the Poisson bracket between 
$G_1$ and $G_2$ via the formula 
\begin{align} \label{PoiBr}
\{G_1,G_2\} := \nabla G_1^T \; X_{G_2} &= i \sum_{k \geq 1} \frac{\d G_1}{\d \eta_k} \frac{\d G_2}{\d \xi_k} - \frac{\d G_1}{\d \xi_k} \frac{\d G_2}{\d \eta_k}.
\end{align}
\end{definition}

We say that the Hamiltonian $H$ is \emph{real} if $H(z)$ is real for all real $z$.

We associate to a given Hamiltonian $H \in \cH_\rho$ the corresponding Hamilton 
equations,
\begin{align*}
\dot z &= X_H(z) = J \; \nabla H(z),
\end{align*}
equivalently
\begin{align*}
\begin{cases}
\dot \xi_k &= -i \frac{\d H}{\d \eta_k}, \\
\dot \eta_k &= i \frac{\d H}{\d \xi_k}.
\end{cases}
&\; \; k \geq 1.
\end{align*}

We also denote by $\Phi^t_H(z)$ the time-$t$ flow associated with the previous 
system. We just remark that if $z=(\xi,\bar\xi)$ and if $H$ is real, then also 
the flow $\Phi^t_H(z) =(\xi(t),\eta(t))$ is real for all $t$, namely 
$\xi(t)=\bar\eta(t)$ for all $t$. Moreover, the Hamiltonian 
\begin{align*}
H(\xi,\eta) &= \sum_{k \geq 1} \omega_k \xi_k\eta_k + N(\xi,\eta),
\end{align*}
with $N$ given by \eqref{Nlin}, leads to the system \eqref{FSys}, that is the 
NLKG equation \eqref{NLKGpot} written in the coordinates $(\xi,\eta)$.

\begin{remark}
We point out that the Hamiltonian $H_0=\sum_{k \geq 1} \omega_k\xi_k\eta_k$, which 
corresponds to the linear part of \eqref{HamNLKGpotnew}, does not belong to 
$\cH_\rho$. However, it generates a flow which maps $\cL_\rho$ to $\cL_\rho$, and 
it is explicitly given by 
\begin{equation*}
\begin{cases}
\xi_k(t) &= e^{-i\omega_kt}\xi_k(0), \\
\eta_k(t) &= e^{i\omega_kt}\eta_k(0).
\end{cases}
\end{equation*}
On the other hand, we will show in sec. \ref{nonlinsec} that the nonlinearity $N$ 
belongs to the space $\cH_\rho$.
\end{remark}

\section{The Space of Polynomials} \label{PolSec}

In this section we define a class of polynomials in $\C^\cZ$. 
We first introduce some notations about multi-indices: let $l \geq 2$ and 
$\bj = (j_1,\ldots,j_l) \in \cZ^l$, with $j_i=(k_i,\delta_i)$, we define 
\begin{itemize}
\item the \emph{norm} of the multi-index $\bj$,
\begin{align*}
\bj &:= \max_{i=1,\ldots,l} |j_i| = \max_{i=1,\ldots,l} |k_i|;
\end{align*}
\item the \emph{monomial} associated with $\bj$,
\begin{align*}
z_\bj &:= z_{j_1} \cdots z_{j_l};
\end{align*}
\item the \emph{momentum} of $\bj$,
\begin{align} \label{momentum}
\cM(\bj) &:= k_1\delta_1 + \cdots +k_l\delta_l;
\end{align}
\item the \emph{divisor} associated to $\bj$,
\begin{align} \label{divisor}
\Omega(\bj) &:= \delta_1 \omega_{k_1} + \cdots + \delta_l \omega_{k_l},
\end{align}
where $\omega_k$ is the $k$-th linear frequency of the system, and is given by 
\eqref{freq}.
\end{itemize}

We then define the set of indices with \emph{zero momentum} by 
\begin{align} \label{zeromom}
\cI_l &= \{ \bj = (j_1,\ldots,j_l) \in \cZ^l : \cM(\bj)=0 \}.
\end{align}
We also say that an index $\bj =(j_1,\ldots,j_l) \in \cZ^l$ is \emph{resonant} 
(we denote it by $\bj \in \cN_l$) if $l$ is even and 
$\bj = \boldsymbol{i} \cup \bar{\boldsymbol{i}}$ for some choice of 
$\boldsymbol{i} \in \cZ^{l/2}$. In particular, if $\bj$ is resonant then its 
associated divisor vanishes, namely $\Omega(\bj)=0$, and its associated 
monomial depends only on the actions.
\begin{align*}
z_\bj = z_{j_1} \ldots z_{j_l} &= \xi_{k_1} \eta_{k_1} \cdots \xi_{k_l} \eta_{k_l},
\end{align*}
where for all $k \geq 1$ $I_k(z) = \xi_k\eta_k$ denotes the $k$-th action. 
Finally, we point out that if $z$ is real then $I_k(z) = |\xi_k|^2$; we also 
remark that for odd $l$ the resonant set $\cN_l$ is empty.

\begin{definition}
Let $k \geq 2$, we say that a polynomial $P(z)=\sum a_\bj z_\bj$ belongs to 
$\cP_k$ if $P$ is real, of degree $k$, which has a zero of order at least 2 in 
$z=0$, and if 
\begin{itemize}
\item $P$ contains only monomials having zero momentum, namely such 
that $\cM(\bj)=0$ for $a_\bj \neq 0$, thus $P$ is of the form
\begin{align} \label{polk}
P(z) &= \sum_{l=2}^k \sum_{\bj \in \cI_l} a_\bj z_\bj,
\end{align}
with $a_{\bar\bj}=\bar{a_\bj}$;
\item the coefficients $a_\bj$ are bounded, namely 
\begin{align*}
\sup_{\bj \in \cI_l} |a_\bj| < + \infty, &\; \; \forall l=2,\ldots,k.
\end{align*}
\end{itemize}
\end{definition}

We endow the space $\cP_k$ with the norm 
\begin{align} \label{polknorm}
\norm{ P } &= \sum_{l=2}^k \sup_{\bj \in \cI_l} |a_\bj|.
\end{align}

\begin{remark}
In the following sections we will crucially use the fact that polynomials in 
$\cP_k$ contain only monomial with zero momentum: indeed, this will allow us 
to control the largest index by the others.
\end{remark}

The zero momentum condition is essential to prove the following result.

\begin{proposition} \label{polkprop}
Let $k \geq 2$ and $\rho>0$, then $\cP_k \subset \cH_\rho$. 
Moreover, any homogeneous polynomial $P$ which belongs to $\cP_k$ satisfies 
\begin{align}
|P(z)| &\leq \norm{P} \norm{z}_\rho^k, \; \; \forall z \in \cL_\rho \label{polkest} \\
\norm{X_P(z)}_\rho &\leq 2k \norm{P} \norm{z}_\rho^{k-1}, \; \; \forall z \in \cL_\rho. \label{polkvfest}
\end{align}
Furthermore, if $P \in\cP_k$ and $Q \in\cP_l$, then $\{P,Q\} \in \cP_{k+l-2}$, and
\begin{align} \label{polpoiest}
\norm{ \{P,Q\} } &\leq 2kl \norm{P} \norm{Q}.
\end{align}
\end{proposition}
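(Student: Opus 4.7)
The plan is to establish the three conclusions separately, relying throughout on the zero-momentum condition $\cM(\bj)=0$ whenever $a_\bj\neq 0$, which is the only structural feature of $\cP_k$ needed to absorb the exponential weights in $\|\cdot\|_\rho$.

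\textbf{Pointwise bound on $P$.} For a homogeneous $P(z)=\sum_{\bj\in\cI_k}a_\bj z_\bj$, I would apply the triangle inequality to get
\begin{equation*}
|P(z)|\;\leq\;\|P\|\sum_{\bj\in\cI_k}|z_{j_1}|\cdots|z_{j_k}|\;\leq\;\|P\|\Big(\sum_{j\in\cZ}|z_j|\Big)^{k},
\end{equation*}
and then dominate each $|z_j|$ by $e^{\rho|j|}|z_j|$ (harmless since $\rho>0$), giving $|P(z)|\leq \|P\|\,\|z\|_\rho^{k}$. This gives $P\in C^\infty(\cL_\rho,\C)$ because the series converges absolutely and uniformly on balls of $\cL_\rho$.

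\textbf{Bound on $X_P$.} Writing $X_P(z)_j=\pm i\,\partial_{z_{\bar\jmath}}P$, I would compute the derivative on monomials: for each $\bj=(j_1,\dots,j_k)\in\cI_k$, one has $\partial_{z_{j'}}z_\bj=\sum_{i:j_i=j'}\prod_{n\neq i}z_{j_n}$. Changing the summation variable $j\mapsto\bar\jmath$ (which preserves $|j|$) and using symmetry in the indices to combine the $k$ identical derivative terms, I obtain
\begin{equation*}
\|X_P(z)\|_\rho\;\leq\;\sum_{j'\in\cZ}e^{\rho|j'|}\sum_{\bj\in\cI_k,\,j_1=j'}k\,|a_\bj|\prod_{n=2}^{k}|z_{j_n}|.
\end{equation*}
The key step is to eliminate the weight $e^{\rho|j'|}$: by zero momentum $k'\delta'=-\sum_{n\geq 2}k_n\delta_n$, so $|j'|\leq\sum_{n\geq 2}|j_n|$ and hence $e^{\rho|j'|}\leq\prod_{n\geq 2}e^{\rho|j_n|}$. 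Substituting and dropping the constraint $(j',\bj')\in\cI_k$ (which makes $j'$ a function of $\bj'$, removing the outer sum) yields $\|X_P(z)\|_\rho\leq c_0\,k\|P\|\|z\|_\rho^{k-1}$; the constant $c_0\leq 2$ comes from the twofold freedom in $\delta'\in\{\pm 1\}$ when reconstructing $j'$ from the momentum constraint. Continuity in $z$ of both $P$ and $X_P$ then follows from uniform convergence on bounded sets, establishing $\cP_k\subset\cH_\rho$.

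\textbf{Poisson bracket.} Using definition \eqref{PoiBr} and the product rule, each monomial in $\{P,Q\}$ arises from pairing an index of a $P$-monomial with the conjugate index of a $Q$-monomial; it has degree $k+l-2$, and its coefficient is a finite integer combination of products $a_\bj b_\bl$. Zero momentum of the new monomial follows because removing an index $j$ from $\bj_P\in\cI_k$ and the conjugate index $\bar\jmath$ from $\bj_Q\in\cI_l$ changes the total momentum by $-k\delta-k(-\delta)=0$; hence $\{P,Q\}\in\cP_{k+l-2}$. For the norm estimate \eqref{polpoiest}, I would bound the coefficient of a given monomial in $\{P,Q\}$ by counting the pairings: each of the $k$ indices of a $P$-term may be matched with each of the $l$ indices of a $Q$-term across the two terms in \eqref{PoiBr}, producing the factor $2kl\|P\|\|Q\|$.

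The only conceptual step is the use of the zero-momentum condition to dominate $e^{\rho|j'|}$ by the product of weights on the remaining indices, which is exactly the mechanism noted in the remark preceding the proposition; everything else is a bookkeeping exercise on monomials and Leibniz rule, and I do not expect any genuine obstacle.
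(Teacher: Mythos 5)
Your proposal is correct and follows essentially the same route as the paper's proof: the crude $\ell^1$-type bound for \eqref{polkest}, the zero-momentum trick $e^{\rho|j'|}\leq\prod_m e^{\rho|j_m|}$ together with the factor $\leq 2$ from reconstructing the differentiated index to get \eqref{polkvfest}, and the $2kl$ count of contracted-index choices and derivative positions for \eqref{polpoiest}. The level of detail (including the lightly treated $C^1$ membership in $\cH_\rho$) matches the paper's own argument, so there is nothing to fix.
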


\begin{proof}
Let 
\begin{align*}
P(z) &= \sum_{\bj \in \cI_k} a_\bj z_\bj,
\end{align*}
then
\begin{align*}
|P(z)| \stackrel{\eqref{polknorm}}{\leq} \norm{P} \sum_{\bj \in \cI_k} |z_{j_1}| \cdots |z_{j_k}| &\leq \norm{P} \norm{z}_{l^1}^k \leq \norm{P} \norm{z}_\rho^k,
\end{align*}
and hence we get \eqref{polkest}.

To prove \eqref{polkvfest}, take $l \in \cZ$, and exploit the zero momentum 
condition in order to get 
\begin{align*}
\left| \dip{P}{z_l} \right| &\leq k \norm{P} \sum_{\stackrel{\bj \in \cZ^{k-1}}{\cM(\bj)=-\cM(l)} } |z_{j_1} \cdots z_{j_l}|.
\end{align*}
We have
\begin{align*}
\norm{X_P(z)}_\rho = \sum_{l \in \cZ} e^{\rho|l|} \left| \dip{P}{z_l} \right| &\leq 
k \norm{P} \sum_{l \in \cZ} \sum_{\stackrel{\bj \in \cZ^{k-1}}{\cM(\bj)=-\cM(l)} } e^{\rho|l|}|z_{j_1} \cdots z_{j_{k-1}}|;
\end{align*}
since $\cM(\bj)=-\cM(l)$,
\begin{align*}
e^{\rho|l|} &\leq \prod_{m=1,\ldots,k-1} e^{\rho|j_m|},
\end{align*}
therefore by summing in $l$ we obtain
\begin{align*}
\norm{X_P(z)}_\rho &\leq 2k \norm{P} \sum_{\bj \in \cZ^{k-1}} e^{\rho|j_1}|z_{j_{k-1}}| \cdots e^{\rho|j_{k-1}}|z_{j_{k-1}}| \leq 2k \norm{P} \norm{z}_\rho^{k-1},
\end{align*}
which gives \eqref{polkvfest}.

Now let us assume that $P$ and $Q$ are homogeneous polynomail of degrees $k$ 
and $l$ respectively, and with coefficients $a_\bk$, $\bk \in \cI_k$, and 
$b_\bl$, $\bl \in \cI_l$. It is easy to check that $\{P,Q\}$ is a monomial of 
degree $k+l-2$ that satisfies the zero momentum condition. Moreover, if we write
\begin{align*}
\{P,Q\}(z) &= \sum_{\bj \in \cI_{k+l-2}} c_\bj z_\bj,
\end{align*}
we have that $c_\bj$ is a sum of coefficients $a_\bk b_\bl$ for which there 
exists $h \geq 1$, and $\delta \in \{\pm1\}$ such that 
\begin{align*}
(h,\delta) &\subseteq \bk \in \cI_k, \\
(h,-\delta) &\subseteq \bl \in \cI_l
\end{align*}
(if for example $(h,\delta)=k_1$ and $(h,-\delta)=l_1$, then 
$(k_2,\ldots,k_k,l_2,\ldots,l_l)=\bj$ ). Hence, for a given $\bj$ the zero 
momentum condition on $\bk$ and $\bl$ determines two possible values of 
$(h,\delta)$. This allow us to deduce \eqref{polpoiest} for monomials; 
its extension to polynomial follows by the definition of the norm 
\eqref{polknorm}.
\end{proof}

\begin{remark}
In Proposition \ref{polkprop} we used a $l^1$-type norm to estimate Fourier 
coefficients and vector fields, instead of the usual $l^2$ norms. 
This choice does not allow to work in Hilbert spaces, and produces a loss of 
regularity each time the estimates are transported from the Fourier space to 
the space of analytic functions $\cA_\rho$. However, by using this approach we 
can control vector fields in a simple way. This argument has been already
used in \cite{faou2013nekhoroshev}.

It may be possible to recover the usual $l^2$ estimates for Fourier 
coefficients and vector fields by introducing a more general space of 
polynomials $\cP_{k_1,k_2}$ ($k_1 \geq 2$, $1 < k_2 \leq \infty$), 
endowed with the following norm
\begin{align*}
\norm{P}_{k_1,k_2} &:= \sum_{l=2}^k \left( \sum_{\bj \in \cI_l} |a_\bj|^{k_2} \right)^{1/k_2}, \; \; 1 < k_2 < + \infty \\
\norm{P}_{k_1,\infty} &:= \norm{P}_{k_1} = \sum_{l=2}^k \sup_{\bj \in \cI_l} |a_\bj|.
\end{align*}
In this case one shoud also introduce a more general space of analytic functions
\begin{align*}
\cL_{\rho,k}:= \{ z \in \C^\cZ : \norm{z}_{\rho,k} := \sum_{j \in \cZ} e^{\rho|j|} |z_j|^k <+\infty \}, \; 1 \leq k < +\infty.
\end{align*}
\end{remark}

\section{Nonlinearity} \label{nonlinsec}

First we recall that the nonlinearity $N$ in \eqref{Nlin} in the coordinates 
$(\psi,\bar\psi)$ takes the form 
\begin{align} \label{Nlin2}
N(\psi,\bar\psi) &= \int_I F \left( \left(\frac{c}{ (c^2-\Delta+ \wtV)^{1/2} }\right)^{1/2} \frac{\psi+\bar\psi}{\sqrt{2}} \right) \di x,
\end{align}
where we assume that $F$ is analytic in a neighbourhood of the origin in 
$\C \times \C$. Therefore there exist positive constants $M$ and $R_0$ 
such that the Taylor expansion of the integrand in the nonlinearity 
\begin{align*}
& F \left( \left(\frac{c}{ (c^2-\Delta+ \wtV)^{1/2} }\right)^{1/2} \frac{\psi+\bar\psi}{\sqrt{2}} \right) \\
&= \sum_{k_1,k_2 \geq 0} \frac{1}{k_1!k_2!} 2^{-(k_1+k_2)/2} \left(\frac{c}{ (c^2-\Delta+ \wtV)^{1/2} }\right)^{(k_1+k_2)/2} \d_{\psi}^{k_1} \d_{\bar\psi}^{k_2} F(0) \psi^{k_1} \bar\psi^{k_2}
\end{align*}
is uniformly convergent on the ball $|\psi|+|\bar\psi| \leq 2R_0$ of 
$\C \times \C$, and is bounded by $M$.

\begin{remark}
The constant $M$ in the above estimate can be chosen uniformly with respect to 
the speed of light $c$, because the smoothing pseudodifferential operator
\begin{align*}
\left(\frac{c}{ (c^2-\Delta+\wtV)^{1/2} }\right)^{1/2}: \cA_\rho \to \cA_\rho
\end{align*}
can be estimated uniformly with respect to $c \geq 1$. 
Indeed it is easy to check that
\begin{align*}
&\norm{ \left(\frac{c}{ (c^2-\Delta+\wtV)^{1/2} }\right)^{1/2} (\psi,\bar\psi) }_\rho \\
&= \sup_{w \in I_\rho} \left| \left(\frac{c}{ (c^2-\Delta+\wtV)^{1/2} }\right)^{1/2} \psi(w) \right| + \left| \left(\frac{c}{ (c^2-\Delta+\wtV)^{1/2} }\right)^{1/2} \bar\psi(w) \right| \\
&= 2 \sup_{w \in I_\rho} \left| \left(\frac{c}{ (c^2-\Delta+\wtV)^{1/2} }\right)^{1/2} \psi(w) \right| \\
&\leq 2 K_\rho \norm{\psi}_\rho,
\end{align*}
 for some constant $K_\rho>0$ that depends only on $\rho$.
\end{remark}

Thus, formula \eqref{Nlin} defines an analytic function on the ball 
$\norm{z}_\rho \leq R_0$ of $\cL_\rho$, and we can write 
\begin{align*}
N(z) &= \sum_{k \geq 0} N_k(z),
\end{align*}
where for all $k \geq 0$ $N_k$ is a homogeneous polynomial defined by 
\begin{align*}
N_k(\xi,\eta) &= \sum_{k_1+k_2=k} \sum_{ (\boldsymbol{a},\boldsymbol{b}) \in \N_0^{k_1} \times \N_0^{k_2} } n_{\boldsymbol{a},\boldsymbol{b}} \xi_{a_1}\cdots\xi_{a_{k_1}} \eta_{b_1}\cdots\eta_{b_{k_2}},
\end{align*}
where 
\begin{align*}
n_{\boldsymbol{a},\boldsymbol{b}} &= \sum_{k_1,k_2 \geq 0} \frac{1}{k_1!k_2!} 2^{-(k_1+k_2)/2} \left(\frac{c}{ (c^2-\Delta+ \wtV)^{1/2} }\right)^{(k_1+k_2)/2} \d_{\psi}^{k_1} \d_{\bar\psi}^{k_2} F(0) \cdot \\
&\cdot \int_I \phi_{a_1}(x) \cdots \phi_{a_{k_1}}(x) \phi_{b_1}(-x) \cdots \phi_{b_{k_2}}(-x) \di x
\end{align*}

Hence it is easy to check that $N_k$ satisfies the zero momentum condition, 
thus $N_k \in \cP_k$ for all $k$, and $\norm{N_k} \leq M R_0^{-k}$.

\begin{remark}
One could also extend this argument by considering not only zero momentum 
monomials, but also monomials with exponentially decreasing or power-law 
decreasing momentum. This would allow to deal with the NLKG equation with a 
multiplicative potential and with nonlinearities that depend on $x$,
\begin{align*}
\frac{1}{c^2} \; u_{tt} \; - \; u_{xx} \; + \; c^2 \; u \; + V(x)u + f(x,u) &= 0, \; \; x \in I,
\end{align*}
but this problem would require a more technical discussion.
\end{remark}

\section{Non resonance conditions} \label{nonresSec}

In order to prove Theorem \ref{NekhThm}, we need to show some 
non resonance properties of the frequencies $\omega=(\omega_k)_{k\geq1}$: 
it will be crucial that these properties hold uniformly 
(or at least, up to a set of small probability) in $(1,+\infty) \times \cV$. 
The argument is similar to the one reported in \cite{pasquali2017longII}. \\

For $r \geq 3$ and $\bj=(j_1,\ldots,j_r) \in \cZ^r$ 
($j_i=(a_i,\delta_i)$, $1 \leq i \leq r$), we define $\mu(\bj)$ 
as the third largest integer among $j_1$,$\ldots$,$j_r$, and we again mention 
that $\bj \in \cZ^r$ is called resonant if $r$ is even and 
$\bj = \boldsymbol{i} \cup \bar{\boldsymbol{i}}$ for some choice of 
$\boldsymbol{i} \in \cZ^{r/2}$.

\begin{theorem} \label{nonrescondthm}
For any sufficiently small $\gamma>0$ there exists a set 
$\cR_\gamma:=\cR_{\gamma,s} \subset \; [1,+\infty) \times \cV$ satisfying 
\begin{align*}
|\cR_{\gamma} \cap ([n,n+1] \times \cV)| &= \cO(\gamma) \; \; \forall n \in \N_0,
\end{align*}
and $\exists \tau>0$ such that 
$\forall (c,(v_j)_j) \in ([1,+\infty) \times \cV) \setminus \cR_{\gamma}$, 
for any sufficiently large $N \geq 1$ and for any $r \geq 1$ 
\begin{align} \label{nonrescond}
|\Omega(\bj)| &\geq \frac{\gamma}{\mu(\bj)^{\tau r}},
\end{align}
for any non resonant $\bj \in\cZ^{r+2} \setminus \cN^{r+2}$ with $\mu(\bj)\leq N$.
\end{theorem}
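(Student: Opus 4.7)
My proof plan is to adapt the uniform-in-$c$ diophantine estimate of \cite{pasquali2017longII} by additionally randomizing over the potential coefficients $(v_k)_k \in \cV$. The overall strategy is the standard measure-theoretic one: for each fixed multi-index $\bj$ in a suitably restricted candidate set, show that the bad set $\{|\Omega(\bj)| < \gamma/\mu(\bj)^{\tau r}\}$ has small measure in $[1,+\infty) \times \cV$; then a union bound plus an appropriate choice of $\tau$ yields the claim.

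Using the expansion \eqref{freq}, I would split $\Omega(\bj) = A(\bj)c^2 + \tfrac{1}{2}B(\bj) + R(\bj,c)$, where $A(\bj) := \sum_i \delta_i$, $B(\bj) := \sum_i \delta_i \lambda_{a_i}$, and $R(\bj,c)$ collects the decaying corrections of order $a_i^4/c^2$. Let $a^{(1)} \geq a^{(2)} \geq a^{(3)} = \mu(\bj)$ be the three largest values of $|a_i|$. I would treat two regimes separately. In the \emph{asymptotic regime} $a^{(1)} \geq C_r \mu(\bj)$, the dominant contributions of $\omega_{a^{(1)}}$ and $\omega_{a^{(2)}}$ cannot cancel the subleading terms without producing either a nonzero $c^2$-coefficient (when $A(\bj) \neq 0$, or when the two largest indices carry the same sign) or an unbalanced $a^2$-term in $B(\bj)$. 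A quantitative analysis uniform in $c \geq 1$, essentially the one in \cite{pasquali2017longII}, then gives a deterministic lower bound $|\Omega(\bj)| \geq c_1 a^{(1)}$, so for $\gamma$ small enough these $\bj$ never enter the bad set.

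In the \emph{bounded regime} $a^{(1)} \leq C_r N$, I would extract a distinguished index $k^* \leq \mu(\bj)$ by sorting the $a_i$ by magnitude and peeling off equal-index opposite-sign pairs from the top; since $\bj \notin \cN_{r+2}$, this peeling cannot exhaust $\bj$, so there is a first surviving index $k^*$ with nonzero net multiplicity $m^* \in \Z \setminus \{0\}$. Then
\[
\frac{\partial \Omega(\bj)}{\partial v_{k^*}} \;=\; m^* \, \frac{c}{2\sqrt{c^2+\lambda_{k^*}}}, \qquad \left|\frac{\partial \Omega(\bj)}{\partial v_{k^*}}\right| \;\geq\; \frac{c}{2\sqrt{c^2+N^2+M}} \;\geq\; \frac{c_0}{N}, \qquad \forall\, c \geq 1.
\]
A one-dimensional Fubini argument in $v_{k^*}$ bounds the bad set measure (relative to the product measure on $\cV$) by $C\gamma N^{s+1}/\mu(\bj)^{\tau r}$. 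Since there are at most $(C_r N)^{r+2}$ multi-indices in this regime with a given value of $\mu(\bj) \leq N$, a dyadic decomposition in $\mu$ together with $\tau$ chosen large enough (depending on $s$) keeps the total bad measure of $O(\gamma)$ uniformly in $c \in [n,n+1]$, which integrates to $|\cR_\gamma \cap ([n,n+1]\times\cV)| = O(\gamma)$.

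The main obstacle is the asymptotic regime: when the two largest indices are both much larger than $\mu(\bj)$, there is no random parameter to tune (varying $v_{k^*}$ with $k^* \leq N$ leaves $\omega_{a^{(1)}}, \omega_{a^{(2)}}$ essentially untouched), so the lower bound on $|\Omega(\bj)|$ must come from a deterministic, uniform-in-$c$ analysis of the dispersion $\omega_k = c\sqrt{c^2+k^2+v_k}$. One must carefully distinguish the sub-case $a^{(1)} = a^{(2)}$ with opposite signs (where the two largest frequencies cancel exactly, reducing the problem to a lower-order one on the remaining indices) from the sub-cases $a^{(1)} \neq a^{(2)}$ or same signs (where a gap in the frequencies produces the desired lower bound), and the resulting inequalities must hold uniformly over all $c \geq 1$; this is exactly the content inherited from the diophantine estimate of \cite{pasquali2017longII}.
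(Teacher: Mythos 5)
The central gap is your treatment of the \emph{asymptotic regime}. The claimed deterministic, uniform-in-$c$ lower bound $|\Omega(\bj)|\geq c_1 a^{(1)}$ when the largest index dominates is false, and cannot be repaired: take $\bj$ consisting of a low-index block with $\alpha:=\sum_i\delta_i>0$ together with a single large index $(l,-1)$. Then $\Omega(\bj)$ is negative for $c$ of order one (where $\omega_k\sim ck$, so $\omega_l$ dominates) and is $\sim\alpha c^2>0$ for $c^2\gg\lambda_l$, hence it vanishes at some intermediate $c$, namely near $c^2=\lambda_l/(\alpha(\alpha+2))$. Likewise, when the two largest indices carry opposite signs and differ by $j$, one has $\omega_m-\omega_l\approx jc$ (of order $c\,j$, not of order $a^{(1)}$), and this can cancel the low-index part for suitable $c$. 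This is exactly why the theorem is stated for $(c,(v_k)_k)$ outside a set $\cR_\gamma$ with $|\cR_\gamma\cap([n,n+1]\times\cV)|=\cO(\gamma)$ rather than for every $c\geq1$: the paper's proof (Propositions \ref{cond0pot}, \ref{cond1mel} and Theorem \ref{cond2mel}) must excise intervals of $c^2$ (e.g. around $c^2_{l,\alpha}=\lambda_l/(\alpha(\alpha+2))$ in the case $\alpha>0$, $c>\lambda_N^{1/2}r^{1/2}$), and in the two-large-indices case it uses the asymptotics of $\omega_m-\omega_l$ to reduce, regime by regime in $c$, either to a deterministic bound or to the potential-measure estimate of Proposition \ref{cond0pot}, whose extra integer shift $m\in\Z$ exists precisely to absorb the contribution of the large modes. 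Your plan randomizes only over the potential and leaves $c$ untouched in the dangerous regime, so it is attempting to prove a statement (valid for all $c\geq1$) that is simply not true.

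A secondary but real problem is the union bound in your bounded regime: the small divisor is measured against $\mu(\bj)^{\tau r}$ (the third largest index), while the number of admissible multi-indices grows through the two largest indices, which can be as large as $C_rN$ even when $\mu(\bj)=\cO(1)$; summing bad-set measures of size $\gamma N^{s+1}/\mu(\bj)^{\tau r}$ over those indices produces a bound that grows with $N$, whereas the excluded set must be chosen once for all sufficiently large $N$ and all $r$. No choice of $\tau$ fixes this. The paper sidesteps it because, after the reductions of Proposition \ref{cond1mel} and Theorem \ref{cond2mel}, the surviving indices are all controlled in terms of $N$ (and $r$, $\gamma$), the measure estimate is taken against $N(\bj)^\tau$ (the product over all entries, with $\gamma_0=\gamma(1+N)^{-\tau r}$), and the dependence on the one or two large modes enters only through the integer shift $m$ or through the excluded $c$-intervals. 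You would need an analogous summability mechanism in the two largest indices (the $+m$ device, or the dispersion-relation asymptotics for $\omega_l$ and $\omega_m-\omega_l$) before the dyadic count in $\mu(\bj)$ can close.
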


\begin{remark}
The non resonance condition in Theorem \ref{nonrescondthm} is slightly weaker 
from the one proved in sec. 2.4 of \cite{faou2013nekhoroshev}; 
this is the reason for which Faou and Grébert are able to prove a result which 
is valid for a set of potentials of full measure, while our result is valid 
only for ``most'' of the values of speed of light and potentials.

We also mention that a non resonance condition similar to our one 
has been already used in \cite{bambusi2003birkhoff} (see (3.3)) 
and in \cite{bambusi2006birkhoff} (see (r-NR)).
\end{remark}

Actually, instead of proving that condition \eqref{nonrescond} is satisfied, 
we will prove a more general result. 
We use the notation ``$a \sleq b$'' (resp. ``$a \sgeq b$'') to mean 
``there exists a constant $K>0$ independent of $c$ such that $a \leq Kb$'' 
(resp. $a \geq Kb$). We also write $N(\bj) := \prod_{i=1}^r (1+|j_i|)$. 

\begin{remark}
Observe that $\mu(\bj) \leq |\bj| \leq N(\bj)$. 
Furthermore, $N(\bj) \leq N$ implies that $|\bj| \leq N$, since 
$N(\bj) \geq |j_1|+\ldots+|j_r| \geq |\bj|$. On the other hand, 
$|\bj| \leq N$ implies that $N(\bj) = \prod_{i=1}^r (1+|j_i|) \leq (1+N)^r$.
\end{remark}

\begin{proposition} \label{cond0pot}
Let $c\geq1$ be fixed. 
Then for any sufficiently small $\gamma>0$ $\exists \cV'_{s,M,\gamma} \subset \cV$ 
with $|\cV \setminus \cV'_{s,M,\gamma}|=\cO(\gamma)$, and 
$\exists \tau>1$ such that $\forall (v_j)_{j \geq 1} \in \cV'_{s,M,\gamma}$, 
for any $N\geq1$ and for any $r \geq 1$ 
\begin{align} \label{nonres0prop}
|\Omega(\bj) \; + \; m| &\geq \frac{\gamma}{N(\bj)^\tau}
\end{align}
$\forall$ $m \in \mathbb{Z}$, 
for any $\bj \in \cZ^{r} \setminus \cN^{r}$ with $|\bj| \leq N$.
\end{proposition}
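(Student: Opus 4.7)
The plan is a classical measure-theoretic small divisor argument. I would set
\begin{align*}
\cR(\gamma,\tau) := \bigcup_{r \geq 1} \bigcup_{\bj \in \cZ^r \setminus \cN^r} \bigcup_{m \in \Z} \left\{ v \in \cV : |\Omega(\bj) + m| < \frac{\gamma}{N(\bj)^\tau} \right\},
\end{align*}
and aim to show that $|\cR(\gamma,\tau)| = \cO(\gamma)$ provided $\tau$ is chosen sufficiently large (depending only on $s$); the set $\cV'_{s,M,\gamma} := \cV \setminus \cR(\gamma,\tau)$ then does the job. The constraint ``$|\bj|\le N$ for some $N$'' in the statement is no real restriction, since the conclusion is claimed for every $\bj$ of finite length.

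The crucial step, on which the whole argument rests, is the observation that non-resonance of $\bj$ forces a nontrivial dependence of $\Omega(\bj)$ on one of the variables $v_{k^*}$. Writing $\bj = (j_1,\ldots,j_r)$ with $j_i = (k_i,\delta_i)$, set $n(\bj,k) := \sum_{i : k_i = k} \delta_i$. If $n(\bj,k)=0$ for every $k$, then in $\bj$ each pair $(k,+1)$, $(k,-1)$ would appear with equal multiplicity, which means $\bj = \boldsymbol{i}\cup\bar{\boldsymbol{i}}$ for some $\boldsymbol{i}$, contradicting $\bj\notin\cN^r$. Hence some $k^* = k^*(\bj)$ has $n^* := n(\bj,k^*) \neq 0$, and differentiating \eqref{divisor} via \eqref{freq} gives
\begin{align*}
\frac{\partial \Omega(\bj)}{\partial v_{k^*}} = n^* \cdot \frac{c}{2\sqrt{c^2+\lambda_{k^*}}}.
\end{align*}
Since $c \mapsto c/(2\sqrt{c^2+\lambda})$ is monotone nondecreasing in $c$ for $\lambda > 0$, this derivative is bounded below, uniformly in $c \geq 1$, by $(2\sqrt{1+(k^*)^2+M/2})^{-1} \gtrsim (1+k^*)^{-1}$. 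After rescaling to $v'_{k^*} \in [-1/2,1/2]$, we obtain $|\partial\Omega/\partial v'_{k^*}| \gtrsim M(1+k^*)^{-(s+1)}$.

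By Fubini (integrating out all variables other than $v'_{k^*}$) and the elementary $1$D small-divisor estimate, for each fixed $\bj$ and $m$,
\begin{align*}
\left|\{v \in \cV : |\Omega(\bj)+m| < \epsilon\}\right| \lesssim \frac{\epsilon\,(1+k^*)^{s+1}}{M}.
\end{align*}
To sum over $m$, I would note that $|\partial_{v_k}\omega_k| \leq 1/2$ uniformly in $c \geq 1$, so each $\omega_{k_i}$ varies over an interval of length at most $M/2$ as $v$ ranges over $\cV$; the oscillation of $\Omega(\bj)$ is therefore $\leq rM/2$, and for $\epsilon \leq 1$ at most $\cO(r)$ integers $m$ can make the bad set nonempty. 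Taking $\epsilon := \gamma/N(\bj)^\tau$ and using $1+k^* \leq N(\bj)$, this yields
\begin{align*}
|\cR(\gamma,\tau)| \lesssim \gamma \sum_{r \geq 1} r \sum_{\bj \in \cZ^r} N(\bj)^{-(\tau-s-1)} = \gamma \sum_{r \geq 1} r\, C_\tau^{\,r}, \qquad C_\tau := 2\sum_{k \geq 1} (1+k)^{-(\tau-s-1)}.
\end{align*}
The constant $C_\tau$ is finite for $\tau > s+2$ and tends to $0$ as $\tau \to \infty$, so picking $\tau$ large enough that $C_\tau < 1$ makes the series over $r$ summable and gives $|\cR(\gamma,\tau)| = \cO(\gamma)$, as required.

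The main obstacle, and essentially the whole content of the proposition, is the first step: establishing that non-resonance of $\bj$ yields a quantitative lower bound on $|\partial\Omega(\bj)/\partial v_{k^*}|$ that is \emph{uniform} in $c \in [1,\infty)$; the monotonicity of $c/\sqrt{c^2+\lambda}$ is what rescues the small-$c$ regime. Everything after that is routine Borel--Cantelli type bookkeeping, and it is only in the final summation that one needs to fix the loss exponent $\tau$ in terms of $s$.
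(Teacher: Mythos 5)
Your proposal is correct and follows essentially the same route as the paper's proof: a lower bound on the derivative of $\Omega(\bj)+m$ with respect to a single rescaled coefficient $v'_{k^*}$, a one-dimensional R\"ussmann-type measure estimate for each fixed $(\bj,m)$, and a union bound over $m$, $\bj$ and $r$ with $\tau$ chosen large in terms of $s$. The only substantive differences are that you invoke non-resonance explicitly to select $k^*$ with net multiplicity $n^*\neq 0$ (the paper differentiates with respect to an arbitrary $v'_h$ and tacitly ignores the possible cancellation when an index of $\bj$ occurs with opposite signs, so your version is the more careful one), and your bookkeeping --- counting only $\cO(rM)$ relevant integers $m$ via the oscillation of $\Omega(\bj)$ over $\cV$ and summing over all $\bj \in \cZ^r$ weighted by $N(\bj)^{s+1-\tau}$ --- replaces the paper's cruder count $|m|\le rN$, $|\bj|\le N$ with $\gamma_0=\gamma(1+N)^{-\tau r}$, giving the same conclusion without the $N$-truncation.
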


\begin{proof}
Fix $r \geq 1$, $\bj \in \cZ^{r}$ non resonant and $m \in \Z$.

Let $p_{\bj,m}((v_j)_{j\geq1}) \; := \; \sum_{j=1}^r \omega_{a_j}\delta_j + m$, 
and let $ 1 \leq h \leq r$. Then 
\begin{align*}
\left| \frac{\d p_{\bj,m}}{\d v'_h} \right| = \left| \frac{\delta_h a_h^{-s}}{2 \sqrt{1+\lambda_{a_h}/c^2}} \right| &\succeq \; \frac{1}{2 a_h^s \sqrt{1+a_h^{\max(s,2)}} } \geq \frac{1}{2 N(\bj)^s \sqrt{1+N(\bj)^{\max(s,2)}}} \; > \; 0,
\end{align*}
since $a_r \leq \prod_{i=1}^r (1+a_i) = \prod_{i=1}^r (1+|j_i|) = N(\bj)$. 
Hence, if we define for $\gamma_0>0$ the set 
\begin{align*}
Res_{\bj,m,r}(\gamma_0) &:= \{(v'_j)_{j\geq1} \; : \; |p_{\bj,m}((v_j)_{j\geq1})| < \gamma_0\},
\end{align*}
we have by Lemma 17.2 of \cite{russmann2001invariant}
\begin{align*}
\left| Res_{\bj,m,r}(\gamma_0)  \right| &\leq  \gamma_0 \; N(\bj)^{s+\max(s,2)/2}.
\end{align*}
Now, we estimate the measure of $\bigcup_{r,m,\bj} Res_{\bj,m,r}(\gamma_0)$; note 
that the argument in the modulus in \eqref{nonres0prop} can be small only if 
$|m| \leq rN$. Therefore
\begin{align}
\left| \bigcup_{r \geq 1} \bigcup_{|m| \leq rN} \; \bigcup_{|\bj| \leq N} \; \{(v'_j)_{j\geq1} \; : \; |p_{\bj,m}((v_j)_{j\geq1})| < \gamma_0\} \right| &\leq \sum_{r=1}^\infty \gamma_0 \; N^{r+1}(1+N)^{r(s+\max(s,2)/2)} \label{estresset}, 
\end{align}
since $N(\bj) \stackrel{|\bj| \leq N}{\leq} (1+N)^r$, and by choosing 
$\gamma_0 \; = \; \frac{\gamma}{(1+N)^{\tau r}}$ with 
$\tau > s+2+\frac{\max(s,2)}{2}$  we get the thesis.
\end{proof}

\begin{proposition} \label{cond1mel}
For any sufficiently small $\gamma>0$ there exists 
a set $\cR_\gamma:=\cR_{\gamma,s,M} \subset \; [1,+\infty) \times \cV$ satisfying 
\begin{align} \label{estrresset}
|\cR_{\gamma} \cap ([n,n+1] \times \cV)| &= \cO(\gamma) \; \; \forall n \in \N_0,
\end{align}
and $\exists \tau>1$ such that 
$\forall (c,(v_j)_j) \in ([1,+\infty) \times \cV) \setminus \cR_{\gamma}$, 
for any sufficiently large $N \geq 1$ and for any $r \geq 1$ 
\begin{align} \label{nonres1prop}
\left|\Omega(\bj) \; + \; \sigma \omega_l\right| &\geq \frac{\gamma}{N(\bj)^\tau}
\end{align}
for any $\bj \in \cZ^{r}$ with $|\bj| \leq N$, $\sigma=\pm 1$, $l \geq N$ 
such that $(\bj,(l,\sigma)) \in \cZ^{r+1} \setminus \cN^{r+1}$ is non resonant.
\end{proposition}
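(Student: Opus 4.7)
The plan is to adapt the proof of Proposition \ref{cond0pot}, accounting for (i) the parameter $c$ being part of the exclusion domain and (ii) the outer index $l$ being potentially unbounded. The key strategic step is a dichotomy on $l$: when $l$ is so large that $\sigma\omega_l$ dominates $\Omega(\bj)$, the non-resonance bound is automatic; otherwise $l$ is bounded in terms of $c$, $N$, $r$, and one may apply a measure-theoretic argument to the extended $(r+1)$-tuple $(\bj,(l,\sigma))$.

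In the trivial regime, I would exploit that $v'_k\in[-1/2,1/2]$ and $v_k = M(1+k)^{-s}v'_k$ imply $|\lambda_k-k^2|\leq M/2$, hence $\omega_l \sgeq cl$ for $l\geq\sqrt{M}+1$ and $|\Omega(\bj)|\sleq rc(c+N)$. Setting $L_0 := K_0 r(n+N)$ with $K_0$ a sufficiently large absolute constant and $c\in[n,n+1]$, one obtains $|\Omega(\bj)+\sigma\omega_l|\geq\omega_l/2\geq 1\geq\gamma/N(\bj)^\tau$ for all $l\geq L_0$ and $\gamma$ small, so no exclusion is required in this range.

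In the Diophantine regime $N\leq l<L_0$, the non-resonance of $(\bj,(l,\sigma))\in\cZ^{r+1}\setminus\cN^{r+1}$ guarantees some index $h\leq L_0$ appearing with nonzero net sign. Using $\omega_k=c\sqrt{c^2+\lambda_k}$ one computes $\partial\omega_k/\partial v_k = c^2/(2\omega_k)$, which, after rescaling by $\partial v_k/\partial v'_k = M(1+k)^{-s}$, is bounded below uniformly in $c\geq 1$, yielding
\begin{align*}
\left|\frac{\partial(\Omega(\bj)+\sigma\omega_l)}{\partial v'_h}\right| \sgeq \frac{1}{h^s\sqrt{1+h^{\max(s,2)}}},
\end{align*}
exactly as in the proof of Proposition \ref{cond0pot}. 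By Lemma 17.2 of \cite{russmann2001invariant}, at fixed $c$ and fixed $v'_k$ for $k\neq h$, the $v'_h$-measure of the set $\{|\Omega(\bj)+\sigma\omega_l|<\gamma/N(\bj)^\tau\}$ is at most $\gamma\, L_0^{s+\max(s,2)/2}/N(\bj)^\tau$; integrating over $c\in[n,n+1]$ and the other coordinates gives the same bound for its measure in $[n,n+1]\times\cV$.

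Summing over $r\geq 1$, $|\bj|\leq N$, $N\leq l\leq L_0$, and $\sigma=\pm 1$, I count at most $(2N)^r$ tuples $\bj$, at most $L_0 = K_0 r(n+N)$ values of $l$, and $\sum_\bj N(\bj)^{-\tau}$ is geometrically summable in $r$ for $\tau$ large. Choosing $\tau > s + 2 + \max(s,2)/2$ as in the proof of Proposition \ref{cond0pot} makes the $r$-series convergent, and the total is $\cO(\gamma)$ with a constant polynomial in $n+N$; this can be absorbed by rescaling $\gamma$, giving \eqref{estrresset}. The main obstacle is the unbounded range of $l$: the trivial estimate takes over only for $l\geq L_0$ while the Diophantine argument must handle $N\leq l<L_0$, and $L_0$ itself grows linearly in $r$, $n$, $N$, so that $\tau$ must be chosen large enough to absorb these polynomial factors via the geometric decay of $N(\bj)^{-\tau}$.
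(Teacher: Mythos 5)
Your dichotomy in $l$ is sound as far as it goes: for $c\in[n,n+1]$ and $l\geq L_0= K_0 r(n+N)$ one indeed has $\omega_l\geq 2|\Omega(\bj)|$, so no exclusion is needed there, and the R\"ussmann step in a coordinate $v'_h$ carrying a nonzero net sign is legitimate at fixed $c$, with Fubini in $c$ afterwards. The genuine gap is in the measure bookkeeping for the remaining range $N\leq l<L_0$. Your bound for the excised set in the slice $[n,n+1]\times\cV$ carries the factor $L_0=K_0r(n+N)$ (the number of admissible $l$) and, as you set it up, a further factor $L_0^{s+\max(s,2)/2}$ from the R\"ussmann estimate (you only guarantee $h\leq L_0$, not $h\leq N$), so what you actually obtain is $\cO(\gamma\,P(n+N))$ with $P$ a polynomial. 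This cannot be ``absorbed by rescaling $\gamma$'': the same $\gamma$ appears in the lower bound \eqref{nonres1prop}, the set $\cR_\gamma$ is fixed once and for all, \eqref{estrresset} must hold with a constant independent of $n$, and the statement must hold for all sufficiently large $N$ simultaneously while $N(\bj)^{-\tau}$ provides no decay in $n$ or in $N$ (recall $N(\bj)$ can be as small as $2^r$ however large $N$ is). As written, your estimate degenerates on unit intervals of large $c$, which is precisely the regime the proposition is designed to control.

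The missing ingredient is a treatment of the unboundedly many $l$ comparable to $c$ that is uniform per unit interval of $c$. The paper writes $\omega_k=c^2+\lambda_k/(1+\sqrt{1+\lambda_k/c^2})$ and sets $\alpha=\sum_j\delta_j+\sigma$: when $\alpha=0$, $\alpha<0$, or $c\sleq\lambda_N^{1/2}r^{1/2}$, smallness of the divisor forces $l$ to be bounded by a function of $N,r$ only, and one reduces to Proposition \ref{cond0pot} with enlarged $N'$, $r'=r+1$; in the dangerous case $\alpha>0$ and $c$ large, the divisor can vanish only for $c^2$ near the resonant value $c^2_{l,\alpha}=\lambda_l/(\alpha(\alpha+2))$, and the exclusion is performed in the $c$-variable (R\"ussmann applied to $c^2\mapsto p_{\bj,l}$, whose derivative is bounded below by a multiple of $\alpha$), the key point being that the windows around $c^2_{l,\alpha}$ are pairwise disjoint in $l$, so their total measure inside $[n,n+1]$ is $\cO(\gamma_0)$ uniformly in $n$. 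If you prefer to keep your fixed-$c$/Fubini scheme, you would need an $n$- and $N$-independent count of dangerous $l$: at fixed $c$ and fixed $\alpha\neq0$ the divisor moves by $\omega_{l+1}-\omega_l\sgeq c$ across the window $l\sim c\sqrt{\alpha(\alpha+2)}$, while varying $(v_k)_k\in\cV$ moves it by at most $\cO(rM)$, so only $\cO(rM)$ values of $l$ per value of $\alpha$ can produce a small divisor; that observation (together with choosing $h\leq N$, which is always possible unless the divisor is trivially of size $\geq\omega_l\geq1$) is exactly what your write-up is missing.
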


\begin{proof}
Without loss of generality, we can choose $\sigma=-1$.

Now fix $r \geq 1$, a non resonant $\bj \in \cZ^{r}$ with $|\bj| \leq N$, and 
$l \geq N$. Set $p_{\bj,l}(c,(v_j)_{j\geq1}) \; := \; \Omega(\bj)-\omega_l$. 
We can rewrite the function $p_{\bj,l}$ in the following way: 
\begin{align*}
p_{\bj,l}(c,(v_j)_{j\geq1}) \; &= \alpha c^2 \; + \; \sum_{j=1}^r \frac{\delta_j\lambda_{a_j}}{1+\sqrt{1+\lambda_{a_j}/c^2}} \; - \frac{\lambda_l}{1+\sqrt{1+\lambda_l/c^2}},
\end{align*}
where $\alpha:= (\sum_{j=1}^r \delta_j)-1 \in \{-r-1,\ldots,r-1\}$. 
Now we have to distinguish some cases: 

\emph{Case $\alpha=0$}: in this case we have that $p_{\bj,l}$ can be small 
only if $l^2 \leq 3(N^2+N^s)^2r^2$. So to obtain the result we just apply 
Proposition \ref{cond0pot} with $N':=\sqrt{3}(N^2+N^s)r$, $r'=r+1$. 

\emph{Case $\alpha\neq0$, $c \leq \lambda_N^{1/2}r^{1/2}$}: we have that 
\begin{align*}
\sum_{j=1}^r \; c\sqrt{c^2+\lambda_{a_j}} \delta_j  &\leq  r \sqrt{c^4+c^2\lambda_N} \; \leq \; \sqrt{2} \; r^2 \lambda_N,
\end{align*}
so $|\Omega(\bj)-\omega_l|$ can be small only for $l^2<rN^2$. 
Therefore, in order to get the thesis we apply Proposition \ref{cond0pot} 
with $N':=\sqrt{r}N$, $r':=r+1$. 

\emph{Case $\alpha > 0$, $c > \lambda_N^{1/2}r^{1/2}$}: first notice that 
if we set $f(x):=\frac{x^2}{2\sqrt{1+x}(1+\sqrt{1+x})^2 }$, and we put 
$x_j:=\lambda_{a_j}/c^2$, in this regime we get 
\begin{align*}
\left|\sum_{j=1}^r \delta_jf(x_j)\right| &\leq \frac{r}{2} f(x_N) \leq \frac{1}{2}.
\end{align*}
Now define $\tilde{p}_{\bj,l}(c^2) := \alpha c^2 \; - \; \frac{\lambda_l}{1+\sqrt{1+\lambda_l/c^2}}$. One can verify that 
\begin{align*}
\tilde{p}_{\bj,l}(c^2) &=  0; \\
c^2=c^2_{l,\alpha} &:= \frac{\lambda_l}{\alpha(\alpha+2)},
\end{align*}
and that
\begin{align*}
\frac{\d \tilde{p}_{\bj,l}}{\d (c^2)}(c^2_{l,\alpha}) &= 
 \alpha - \frac{\alpha^2(\alpha+2)^2}{2\sqrt{1+\alpha(\alpha+2)}(1+\sqrt{1+\alpha(\alpha+2)})^2}  > 0.
\end{align*}
Besides, there exists a sufficiently small $\rho>0$ such that in the interval 
$\left[ c^2_{l,\alpha}-\frac{\varrho}{\alpha(\alpha+2)},c^2_{l,\alpha}+\frac{\varrho}{\alpha(\alpha+2)} \right] \; =: \; [c^2_{l,\alpha,-},c^2_{l,\alpha,+}]$ we have  
\begin{align*}
\frac{\d \tilde{p}_{\bj,l}}{\d (c^2)}(c^2) &> \left(\frac{1}{2}+\frac{1}{2(r+1)}\right) \alpha 
\end{align*}
(we can also assume that the intervals $([c^2_{l,\alpha,-},c^2_{l,\alpha,+}])_{l \geq N}$ are pairwise disjoint, by taking $\rho$ sufficiently small). 
Thus, by exploiting Lemma 17.2 of \cite{russmann2001invariant}, we get   
\begin{align} \label{estreset1}
\left| \left\{ c^2 \in B\left(c^2_{l,\alpha},\frac{\varrho}{\alpha(\alpha+2)}\right] : |\tilde{p}_{\bj,l}(c^2)| \leq \gamma_0 \right\}\right| &\leq \gamma_0 \frac{2(r+1)}{(r+2)\alpha} \leq \frac{2 \gamma_0}{\alpha}
\end{align}
for any $\gamma_0>0$ s.t. $\frac{2\gamma_0}{\alpha} < \frac{\varrho}{\alpha(\alpha+2)}$; $\gamma_0 < \frac{\varrho}{2 (\alpha+2)}$. 

Now, since in this regime 
$\left|\frac{\d (p_{\bj,l}-\tilde{p}_{\bj,l})}{\d c^2}\right| \leq \frac{1}{2}$, 
we can deal with $p_{\bj,l}$ as in \eqref{estreset1}, and we get that 
for each $n \in \N_0$
\begin{align} 
\left| \bigcup_{l\geq N} \; \left( \{c^2 \in [c^2_{l,\alpha,-},c^2_{l,\alpha,+}] \; : |p_{\bj,l}(c^2)| \leq\gamma_0\} \cap [n,n+1] \right) \right| &\leq K_\alpha \gamma_0,
\end{align}
for some $K_\alpha>0$. By taking the union over $|\bj| \leq N$ and arguing as in 
Proposition \ref{cond0pot}, we can deduce \eqref{estresset}.

\emph{Case $\alpha<0$, $c > \lambda_N^{1/2}r^{1/2}$}: since
\begin{align*}
\left| \sum_{j=1}^r \frac{\delta_j\lambda_{a_j}}{1+\sqrt{1+\lambda_{a_j}/c^2}} \right| &\leq \frac{r\lambda_N}{2} \leq \frac{c^2}{2},
\end{align*}
we have that $p_{k,l}$ can be small only if $\lambda_N < r \lambda_N$. 
So, in order to get the result, we apply Proposition \ref{cond0pot} 
with $N':=r^{1/2}N$, $r':=r+1$. 
\end{proof}

\begin{theorem} \label{cond2mel}
For any sufficiently small $\gamma>0$ there exists 
a set $\cR_\gamma:=\cR_{\gamma,s,r} \subset \; [1,+\infty) \times \cV$ satisfying 
\begin{align*}
|\cR_{\gamma} \cap ([n,n+1] \times \cV)| &= \cO(\gamma) \; \; \forall n \in \N_0,
\end{align*}
and $\exists \tau>1$ such that 
$\forall (c,(v_j)_j) \in ([1,+\infty) \times \cV) \setminus \cR_{\gamma}$, 
for any sufficiently large $N \geq 1$ and for any $r \geq 1$ 
\begin{align} \label{nonres2prop}
\left|\Omega(\bj) \; + \; \sigma_1 \omega_l \; + \; \sigma_2 \omega_m \right| &\geq \frac{\gamma}{N(\bj)^\tau}
\end{align}
for any $\bj \in \cZ^{r}$ with $|\bj| \leq N$, 
$\sigma_1,\sigma_2=\pm 1$, $m > l \geq N$ such that 
$(\bj,(l,\sigma_1),(m,\sigma_2)) \in \cZ^{r+2}\setminus\cN^{r+2}$ is non resonant.
\end{theorem}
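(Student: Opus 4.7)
The plan is to extend the case analysis used in the proof of Proposition \ref{cond1mel} to accommodate two large indices $N \leq l < m$. First I would set
\begin{align*}
p_{\bj,l,m}(c,(v_j)) &:= \Omega(\bj) + \sigma_1 \omega_l + \sigma_2 \omega_m,
\end{align*}
and use the expansion \eqref{freq} of $\omega_k$ to write
\begin{align*}
p_{\bj,l,m} &= \alpha c^2 + \sum_{j=1}^r \frac{\delta_j \lambda_{a_j}}{1+\sqrt{1+\lambda_{a_j}/c^2}} + \frac{\sigma_1 \lambda_l}{1+\sqrt{1+\lambda_l/c^2}} + \frac{\sigma_2 \lambda_m}{1+\sqrt{1+\lambda_m/c^2}},
\end{align*}
with $\alpha := \sigma_1 + \sigma_2 + \sum_{j=1}^r \delta_j \in \{-r-2,\ldots,r+2\}$. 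The proof then splits into cases along the same lines as Proposition \ref{cond1mel}, with additional sub-cases coming from the relative sign of $\sigma_1$ and $\sigma_2$.

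If $\alpha \neq 0$ and $c \sgeq \lambda_N^{1/2} r^{1/2}$, the dominant contribution is $\alpha c^2$, and I would mimic the argument of the case $\alpha > 0$ of Proposition \ref{cond1mel}: locate the zero $c^2_{l,m,\alpha}$ of the leading-order simplified function, show that $\partial_{c^2} p_{\bj,l,m}$ stays bounded away from zero on a small neighborhood of that zero, and invoke Lemma 17.2 of \cite{russmann2001invariant} to obtain a measure bound of the form $|\{c^2 \in [n,n+1] : |p_{\bj,l,m}| \leq \gamma_0\}| \sleq \gamma_0/|\alpha|$. If $\alpha = 0$ and $\sigma_1 = \sigma_2$ (say $\sigma_1 = \sigma_2 = 1$, so that $\sum \delta_j = -2$), then $\omega_l + \omega_m \geq 2c^2$ can be compensated by $\Omega(\bj)$ only if $c^2 \sleq r \lambda_N$, which restricts $l, m$ to a bounded range depending on $N$ and $r$ and reduces the problem to an application of Proposition \ref{cond0pot} with cut-off $N' = C r^{1/2} N$ and length $r' = r+2$. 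The case $\alpha \neq 0$ with $c \sleq \lambda_N^{1/2} r^{1/2}$ is handled by the same reduction.

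The main obstacle is the case $\alpha = 0$ with $\sigma_1 = -\sigma_2$, which has no counterpart in Proposition \ref{cond1mel}. Taking WLOG $\sigma_1 = 1$, $\sigma_2 = -1$ and $\sum \delta_j = 0$, I would exploit the identity
\begin{align*}
\omega_l - \omega_m &= \frac{c(\lambda_l - \lambda_m)}{\sqrt{c^2+\lambda_l} + \sqrt{c^2+\lambda_m}},
\end{align*}
which gives $|\omega_l - \omega_m| \leq |\lambda_l - \lambda_m|$. Non-resonance of $(\bj,(l,1),(m,-1))$ together with $m > l$ then implies that $|p_{\bj,l,m}|$ can be smaller than $1$ only when $|\lambda_l - \lambda_m| \sleq r \lambda_N$, which in turn bounds $m - l \sleq r N$. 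Treating the block $(\bj,(l,1),(m,-1))$ as an element of $\cZ^{r+2}$ with modified cut-off $N' = C r N^2$ then reduces this subcase to Proposition \ref{cond0pot}. The delicate point is that here $\omega_l - \omega_m$ depends nontrivially on the random coefficients $v_l$ and $v_m$, so one has to verify the non-degeneracy of the relevant partial derivatives; this works because $\partial_{v'_h} p_{\bj,l,m}$ is still proportional to $\delta_h/(2 a_h^s \sqrt{1+\lambda_{a_h}/c^2})$ as in the proof of Proposition \ref{cond0pot}, and hence satisfies the hypotheses of Lemma 17.2 of \cite{russmann2001invariant}. Combining all cases, and choosing $\gamma_0 = \gamma/(1+N)^{\tau r}$ with $\tau > s + 2 + \max(s,2)/2$ sufficiently large, the union of the excluded sets over $r \geq 1$, over non-resonant $\bj$ with $|\bj| \leq N$, and over $l, m$ in the relevant ranges has measure $\cO(\gamma)$ on each $[n,n+1]\times\cV$, yielding the claim.
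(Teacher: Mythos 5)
Your reduction of the easy cases (equal signs $\sigma_1=\sigma_2$, or $\alpha\neq 0$ with $c\sleq \lambda_N^{1/2}r^{1/2}$) is in the spirit of the paper, but the treatment of the case you correctly single out as the main obstacle ($\alpha=0$, $\sigma_1=-\sigma_2$, both $l,m\geq N$ large) does not work, and this is precisely the case that makes Theorem \ref{cond2mel} genuinely harder than Proposition \ref{cond1mel}. First, the inference ``$|p_{\bj,l,m}|<1$ forces $|\lambda_l-\lambda_m|\sleq r\lambda_N$'' is false: the identity $\omega_m-\omega_l=c(\lambda_m-\lambda_l)/(\sqrt{c^2+\lambda_m}+\sqrt{c^2+\lambda_l})$ gives the upper bound $|\omega_m-\omega_l|\leq|\lambda_m-\lambda_l|$, which goes the wrong way; what smallness of $p_{\bj,l,m}$ actually forces is roughly $c(m-l)\sleq r\lambda_N$, i.e.\ a bound on the gap $m-l$ (and only in suitable regimes of $c$), while $\lambda_m-\lambda_l\approx (m-l)(m+l)$ and the indices $l,m$ themselves remain unbounded. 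Consequently the proposed reduction to Proposition \ref{cond0pot} with a finite cut-off $N'=CrN^2$ is not available: that proposition only covers multi-indices all of whose entries are $\leq N'$, whereas here you face infinitely many conditions with $l,m\to\infty$ and only $m-l$ controlled. Trying to rescue this by a direct R\"ussmann estimate in the variables $v'_l,v'_m$ also fails quantitatively: $\partial_{v'_m}p_{\bj,l,m}\propto (1+m)^{-s}(1+\lambda_m/c^2)^{-1/2}$ degenerates as $m\to\infty$ at fixed $c$, so each excluded set has measure growing with $m$, while the right-hand side of \eqref{nonres2prop} only involves $N(\bj)$ (the small indices); the union over the infinitely many admissible pairs $(l,m)$ is then not $\cO(\gamma)$.

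The paper closes exactly this gap by an asymptotic analysis of the difference $\omega_m-\omega_l$ in regimes of $c$, after first disposing of $m\sleq N^\delta$ ($\delta>3$) via Propositions \ref{cond0pot} and \ref{cond1mel}: for $c<\lambda_l^{1/6}$ one has $\omega_m-\omega_l=jc+\frac12(v_m/m-v_l/l)+\cO(c^3\lambda_l^{-1/2})$ with $j=m-l$, so the infinitely many small divisors accumulate on the integer multiples of $c$ and collapse to finitely many conditions on $c$ (a new measure exclusion, not a reduction to \ref{cond0pot}); for $c>\lambda_m$ one has $\omega_m-\omega_l=\pm(2jl+j^2)\pm a_{lm}$ with $|a_{lm}|\leq C/l$, and it is only here that the integer shift $m\in\Z$ built into \eqref{nonres0prop} lets you reduce to Proposition \ref{cond0pot}; and in the intermediate regime $\lambda_l^{1/6}\leq c\sleq\lambda_l^{1/2}$ the bound $|\omega_m-\omega_l|\sgeq c\lambda_l^{1/2}/\sqrt{c^2+\lambda_m}$ together with $l>N^\delta$ makes the divisor deterministically large compared with $r\lambda_N$, with no measure exclusion needed. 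Some mechanism of this kind---turning the unbounded family $(l,m)$ into finitely many effective conditions, or into divisors that are automatically large---is what your argument is missing.
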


\begin{proof}
If $\sigma_i=0$ for $i=1,2$, then we can conclude by using Proposition 
\ref{cond1mel}. \\
\indent Now fix $r \geq 1$, a non resonant $\bj \in \cZ^{r}$ with $|\bj|\leq N$,
two positive integers $l$ and $m$ such that $m > l \geq N$, 
and assume that $\sigma_1=-1$, $\sigma_2=1$. Introduce   
\begin{align*}
p_{\bj,l,m}(c^2)  &:= \Omega(\bj) \; - \omega_l(c^2) \; +  \omega_m(c^2).
\end{align*}
Now fix $\delta>3$. If $m \sleq N^\delta$, then we can conclude by applying 
Proposition \ref{cond0pot} and \ref{cond1mel}. 
So from now on we will assume that $m,l>N^\delta$. 

We have to distinguish several cases: 

\emph{Case $c < \lambda_l^\alpha$:} we point out that, since 
\begin{align*}
c \sqrt{c^2+\lambda_l} = c \lambda_l^{1/2} \sqrt{1+\frac{c^2}{\lambda_l}} &=  c \lambda_l^{1/2} \left( 1+ \frac{c^2}{2\lambda_l} + O\left(\frac{1}{\lambda_l^2}\right) \right),
\end{align*}
we get (denote $m=l+j$)
\begin{align*}
\omega_m-\omega_l &=  j c \; + \; \frac{1}{2}\left(\frac{v_m}{m}-\frac{v_l}{l}\right) \; + \; \frac{c^3}{2\lambda_l^{1/2}} - \frac{c^3}{2\lambda_m^{1/2}} + O\left(\frac{1}{m^3}\right) + O\left(\frac{1}{l^3}\right),
\end{align*}
that is, the integer multiples of c are accumulation points for the differences 
between the frequencies as $l,m \to \infty$, provided that $\alpha<\frac{1}{6}$.

\emph{Case $c > \lambda_m$:} in this case we have 
(again by denoting $m=l+j$) that 
$\lambda_m-\lambda_l = j(j+2l)+(v_m-v_l) \; = \; 2jl+j^2+a_{lm}$, with $|a_{lm}| \leq \frac{C}{l}$, so that
\begin{align*}
p_{\bj,l,m} &= \Omega(\bj) \; \pm 2jl  \pm j^2 \pm a_{lm}.
\end{align*}
If $l > 2 \,C N^{\tau}/\gamma$ then the term $a_{lm}$ represents a negligible 
correction and therefore we can conclude by applying Proposition \ref{cond0pot}.
 On the other hand, if $l \leq 2 \, C N^{\tau}/\gamma$, we can apply the same 
Proposition with $N':= 2 C N^{\tau}/\gamma$ and $r':=r+2$. 

\emph{Case $\lambda_l^{1/6} \leq c \sleq \lambda_l^{1/2}$:} 
if we rewrite the quantity to estimate 
\begin{align*}
p_{\bj,l,m}(c^2) &= \alpha c^2 \; + \; \sum_{h=1}^r \frac{\lambda_{a_h}\delta_h}{1+\sqrt{1+\frac{\lambda_{a_h}}{c^2} }} \; + \; \omega_m \; - \; \omega_l,
\end{align*}
where $\alpha:=\sum_{h=1}^r \delta_h$, we distinguish three cases:
\begin{itemize}
\item if $\alpha>0$, then we notice that
\begin{align*}
\left| \sum_{h=1}^r \frac{\lambda_{a_h}\delta_h}{1+\sqrt{1+\frac{\lambda_{a_h}}{c^2} }} \right| \leq \frac{r \lambda_N}{1+\sqrt{1+\lambda_N/c^2}} &\leq \frac{r\lambda_N}{1+\sqrt{1+\lambda_N/\lambda_l}} \leq \frac{r\lambda_N}{2},
\end{align*}
\begin{align*}
|\omega_m-\omega_l| &= c \frac{\lambda_m-\lambda_l}{\sqrt{c^2+\lambda_m}+\sqrt{c^2+\lambda_l}} \stackrel{m>l}{\geq} \frac{c\lambda_l^{1/2}}{\sqrt{c^2+\lambda_m}+\sqrt{c^2+\lambda_l}} \\
&\gtrsim \frac{N^{\delta/3} \lambda^{1/2}}{\sqrt{N^{2\delta/3}+\lambda_m^{1/2}}+\sqrt{N^{2\delta/3}+\lambda_l^{1/2}}} >0,
\end{align*}
thus $|p_{\bj,l,m}|>|\lambda_l^{1/3}-\frac{r}{2}\lambda_N|>0$, since $l>N^3$; \\
\item if $\alpha=0$, then we just notice that 
\begin{align*}
|\omega_m-\omega_l| \geq \gamma(\lambda_m-\lambda_l) &\stackrel{m>l}{\gtrsim} \; \gamma_0 \; \lambda_l^{1/2},
\end{align*}
which is greater than $\gamma_0/N^\tau$ for $\tau>-1$, since $l>N^3$; \\
\item if $\alpha<0$, then we just recall that 
$|\omega_m-\omega_l|>\gamma_0\lambda_l^{1/2}$, and by choosing $\gamma_0$ 
sufficiently small (actually $\gamma_0\leq|\alpha|$) we get that 
also in this case $p_{\bj,l,m}$ is bounded away from zero.
\end{itemize}

\end{proof}

Now it is easy to deduce Theorem \ref{nonrescondthm} by exploiting Theorem 
\ref{cond2mel}.

\section{Normal form} \label{BNFSec}

Fix $N \geq 1$. For a fixed integer $k \geq 3$, we define
\begin{align*}
\sI_k(N) &:= \{ \bj \in \cI_k : \mu(\bj)>N \}.
\end{align*}

\begin{definition} \label{NFdef}
Let $N$ be an integer. We say that a polynomial $Z \in \cP_k$ is in $N$-normal 
form if it is of the form
\begin{align*}
Z(z) &= \sum_{l=3}^k \sum_{\bj \in \cN_l \cup \sI_l(N)} a_\bj z_\bj,
\end{align*}
namely, if $Z$ contains either monomials that depend only of the actions or 
monomials whose index $\bj$ satisfies $\mu(\bj)>N$, i.e. it involves 
at least three modes with index greater than $N$.
\end{definition}

The reason for introducing such a definition of normal form is given by the 
observation that the vector field of a monomial of the form $z_{j_1}\ldots z_{j_k}$ containing at least three modes with index larger than $N$ induces a flow 
whose dynamics can be controlled for exponentially long ($N$-dependent) time 
scales. This will prevent exchanges of energy between low- and high-index modes 
for such time scales. 

In \cite{bambusi2003birkhoff} and \cite{bambusi2006birkhoff} such monomials 
were neglected, since the contribution of their vector fields was 
to be small in Sobolev norms, and this will keep all the modes (almost) 
invariant. The point is that, even though the contribution of the vector 
fields of such monomials is not necessarily small in analytic norm, it can be 
controlled for exponentially long times. This key property was already used 
by Faou, Grébert and Paturel in \cite{faou2010birkhoff2} and by 
Faou and Grébert in \cite{faou2013nekhoroshev}. \\

Before we state and prove the aforementioned property we just state an 
elementary lemma.

\begin{lemma} \label{ODElemma}
Let $f:\R \to \R_+$ be a continuous funciton, and let $y: \R \to \R_+$ 
be a differentiable function such that 
\begin{align*}
\frac{\di}{\di t} y(t) &\leq 2 f(t) \sqrt{y(t)}, \; \; \forall t \in \R.
\end{align*}
Then
\begin{align*}
\sqrt{y(t)} &\leq \sqrt{y(0)} + \int_0^t f(s) \di s, \; \; \forall t \in \R.
\end{align*}
\end{lemma}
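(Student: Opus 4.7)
The plan is to reduce this to the fundamental theorem of calculus applied to $\sqrt{y(t)}$. Formally, if $y$ were strictly positive, then $\sqrt{y}$ would be differentiable and the chain rule gives
\[
\frac{\di}{\di t}\sqrt{y(t)} \;=\; \frac{y'(t)}{2\sqrt{y(t)}} \;\leq\; f(t),
\]
so integration from $0$ to $t$ would yield the claim immediately. The only real obstacle is that $y$ is only assumed nonnegative, and $\sqrt{\cdot}$ is not differentiable at zero, so the chain rule step is not directly justified at times where $y(t)=0$.

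To circumvent this I would use a standard regularisation. For $\varepsilon>0$ set $y_\varepsilon(t):=y(t)+\varepsilon$. Then $y_\varepsilon$ is differentiable and strictly positive, $y_\varepsilon'=y'$, and by hypothesis
\[
y_\varepsilon'(t) \;\leq\; 2 f(t) \sqrt{y(t)} \;\leq\; 2 f(t) \sqrt{y_\varepsilon(t)}.
\]
Dividing by $2\sqrt{y_\varepsilon(t)}>0$ and applying the chain rule, which is now rigorously valid, gives $\frac{\di}{\di t}\sqrt{y_\varepsilon(t)} \leq f(t)$ pointwise.

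Integrating this inequality from $0$ to $t\geq 0$ (using that $f$ is continuous, hence $\sqrt{y_\varepsilon}$ is absolutely continuous) produces
\[
\sqrt{y(t)+\varepsilon} \;\leq\; \sqrt{y(0)+\varepsilon} + \int_0^t f(s)\,\di s.
\]
Finally I would let $\varepsilon \to 0^+$ and invoke continuity of $\sqrt{\cdot}$ to conclude. The argument for negative $t$ (if needed in the intended application) is symmetric, integrating from $t$ to $0$ and using the sign of $\int_0^t f$ accordingly. The only substantive point to check carefully is the interchange of limit and the two $\sqrt{\cdot}$ evaluations, which is trivial by continuity — so in effect the entire difficulty of the lemma is encapsulated in the $\varepsilon$-regularisation trick used to avoid the non-differentiability of $\sqrt{y}$ at its zeros.
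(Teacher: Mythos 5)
Your regularisation argument is correct, and there is in fact nothing in the paper to compare it with: the lemma is stated as ``elementary'' and no proof is given. The $\varepsilon$-shift $y_\varepsilon := y+\varepsilon$, the chain rule applied to $\sqrt{y_\varepsilon}$ (now legitimate since $y_\varepsilon>0$), integration, and the limit $\varepsilon\to 0^+$ is the standard way to get around the non-differentiability of $\sqrt{\cdot}$ at the zeros of $y$, and it proves the stated estimate for $t\ge 0$.

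Two small points. First, you do not need (and cannot cheaply assert) absolute continuity of $\sqrt{y_\varepsilon}$: $y'$ is only assumed to exist pointwise, so it need not be locally integrable. You also do not need it: set $h(t):=\sqrt{y_\varepsilon(t)}-\int_0^t f(s)\,\di s$; then $h$ is differentiable everywhere with $h'\le 0$, hence nonincreasing by the mean value theorem, which is exactly the integrated inequality for $t\ge 0$. Second, your closing remark about negative $t$ is not right as stated: with only the one-sided hypothesis $y'\le 2f\sqrt{y}$ the conclusion fails for $t<0$ (take $f\equiv 0$ and $y$ strictly decreasing; then $\int_0^t f\,\di s\le 0$ while $y(t)>y(0)$), and integrating from $t$ to $0$ only yields a lower bound on $\sqrt{y(t)}$. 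So the lemma should be read for $t\ge 0$, or with the two-sided hypothesis $|y'(t)|\le 2f(t)\sqrt{y(t)}$; the latter is what actually holds where the paper uses it (the bound there is on $|e^{2\rho a}\dot I_a|$), and then your argument applied to $t\mapsto y(-t)$ covers negative times as well.
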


For a given $N$ and $z \in \cL_\rho$, we set $\cR^N_\rho(z):= \sum_{|j| \geq N} e^{\rho|j|}|z_j|$. Observe that if $z \in \cL_{\rho+\mu}$, then
\begin{align} \label{estrem}
\cR^N_\rho(z) &\leq e^{-N \mu} \norm{z}_{\rho+\mu}.
\end{align}

\begin{proposition}
Let $N \in \N_0$, and $k \geq 3$. 
Let $Z$ be a homogeneous polynomial of degree $k$ in $N$-normal form. 
Denote by $z(t)$ the real solution of the flow associated to the Hamiltonian 
$H_0+Z$. Then 
\begin{align}
\cR^N_\rho(t) &\leq \cR^N_\rho(0) + 4k^3 \norm{Z} \int_0^t \cR^N_\rho(z(s))^2 \norm{z(s)}_\rho^{k-3} \di s, \label{estanrem} \\
\norm{z(t)}_\rho &\leq \norm{z(0)}_\rho + 4k^3 \norm{Z} \int_0^t \cR^N_\rho(z(s))^2 \norm{z(s)}_\rho^{k-3} \di s. \label{estandata}
\end{align}
\end{proposition}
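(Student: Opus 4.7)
The plan is to bound $\tfrac{d}{dt}|z_l(t)|$ pointwise in $l$, weight by $e^{\rho|l|}$, sum, and integrate. Since $t \mapsto |z_l(t)|$ is Lipschitz with pointwise derivative bounded by $|\dot z_l(t)|$, the sums $\cR^N_\rho(z(t))$ and $\norm{z(t)}_\rho$ are absolutely continuous and can be compared to integrals of their derivatives, as in \eqref{estanrem}--\eqref{estandata}. First, I claim that neither $H_0$ nor the resonant part of $Z$ contributes to $d|z_l|/dt$. The flow of $H_0$ acts as $\xi_k(t) = e^{-i\omega_k t}\xi_k(0)$ and $\eta_k(t) = e^{i\omega_k t}\eta_k(0)$, preserving every $|z_l|$. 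Decomposing $Z = Z_{\cN} + Z_{\sI}$ according to Definition \ref{NFdef}, every monomial in $Z_{\cN}$ equals an action product $I_{k_1}\cdots I_{k_{k/2}}$; since $\{I_l, I_m\} = 0$, the flow of $Z_{\cN}$ preserves every action, hence every $|z_l|$ on real trajectories.

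It then remains to estimate the vector field of $Z_{\sI} = \sum_{\bj \in \sI_k(N)} a_{\bj} z_{\bj}$. From $|\tfrac{d}{dt}|z_l|| \leq |\dot z_l| = |\partial_{z_{\bar l}} Z_{\sI}|$ and
\begin{align*}
\left|\partial_{z_{\bar l}} Z_{\sI}\right| \leq \norm{Z} \sum_{\bj \in \sI_k(N)} \sum_{i=1}^k \mathbf{1}_{j_i = \bar l} \prod_{m \neq i}|z_{j_m}|,
\end{align*}
I would multiply by $e^{\rho|l|}$, sum over all $l \in \cZ$, and swap summations to obtain the bound $\norm{Z} \sum_{\bj \in \sI_k(N)} \sum_{i=1}^k e^{\rho|j_i|} \prod_{m\neq i}|z_{j_m}|$. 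The zero-momentum identity $\cM(\bj) = 0$ then delivers $e^{\rho|j_i|} \leq \prod_{m \neq i} e^{\rho|j_m|}$, transferring the exponential weight of the missing factor onto the surviving ones.

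The final step uses the defining property of $\sI_k(N)$: at least three entries of $\bj$ have modulus $> N$, so for every position $i$ there are at least two positions $m_1, m_2 \in \{1,\dots,k\}\setminus\{i\}$ with $|j_{m_1}|, |j_{m_2}| > N$. Selecting such a pair in at most $\binom{k-1}{2}$ ways, summing the two designated factors over indices of modulus $> N$ produces $\cR^N_\rho(z)^2$, while the remaining $k-3$ factors sum freely to $\norm{z}_\rho^{k-3}$. A further factor of at most $2$ enters from the values of $j_i$ that zero momentum leaves free, and summing over $i \in \{1,\dots,k\}$ contributes another factor $k$, yielding an overall combinatorial constant bounded by $4k^3$, so
\begin{align*}
\sum_{l \in \cZ} e^{\rho|l|} \left|\tfrac{d}{dt}|z_l|\right| \leq 4 k^3 \norm{Z} \, \cR^N_\rho(z(t))^2 \, \norm{z(t)}_\rho^{k-3}.
\end{align*}
Restricting the sum to $\{l : |l| \geq N\}$ and integrating on $[0, t]$ gives \eqref{estanrem}; integrating the full sum gives \eqref{estandata}. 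The principal technical difficulty will be the combinatorial bookkeeping that pins down two indices of modulus $> N$ to create the factor $\cR^N_\rho(z)^2$; everything else is a direct consequence of the zero-momentum constraint and the definition of the $N$-normal form.
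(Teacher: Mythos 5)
Your proposal is correct and is essentially the paper's own argument: both exploit that resonant monomials depend only on the actions, that each monomial with $\mu(\bj)>N$ retains at least two indices of modulus $>N$ after one factor is differentiated away, and the zero-momentum condition to transfer the weight $e^{\rho|j_i|}$ and to pin down the removed index, yielding the bound $4k^3\norm{Z}\,\cR^N_\rho(z)^2\norm{z}_\rho^{k-3}$. The only (cosmetic) difference is that the paper works with $e^{2\rho a}\dot I_a=\{I_a,Z\}$ and passes to $\sqrt{I_a}$ via Lemma \ref{ODElemma}, whereas you differentiate $|z_l|$ directly using Lipschitz/absolute-continuity — equivalent devices; just note that your line $|\dot z_l|=|\partial_{z_{\bar l}}Z_{\sI}|$ should be read as a bound on $\frac{\di}{\di t}|z_l|$ after discarding the purely rotational contributions of $H_0$ and $Z_{\cN}$ (i.e.\ $\{I_l,H_0\}=\{I_l,Z_{\cN}\}=0$ along the combined flow), not as an identity for $\dot z_l$ itself.
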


\begin{proof}
Let $a \geq 1$ be fixed, and let $I_a(t)=\xi_a(t)\eta_a(t)$ be the actions 
associated to the solution of the Hamiltonian $H_0+Z$. 
Recalling that $H_0=H_0(I)$, and writing $\bj=(j_1,\ldots,j_k) \in \cZ^k$ 
($j_i=(a_i,\delta_i)$, $1 \leq i \leq k$), we have
\begin{align*}
|e^{2\rho a} \dot I_a| = |e^{2\rho a} \{I_a,Z\}| \stackrel{\eqref{polpoiest}}{\leq} 
2k \norm{Z} |e^{\rho a} \sqrt{I_a}| \left( \sum_{\substack{\cM(\bj)=\pm a, \\ 2 \; \text{indices} \; > N} } e^{\rho a} |z_{j_1} \ldots z_{j_{k-1}}| \right),
\end{align*}
and by applying Lemma \ref{ODElemma} we obtain
\begin{align} \label{estanact}
e^{\rho a} \sqrt{I_a(t)} \leq e^{\rho a} \sqrt{I_a(0)} + 2k \norm{Z} 
\int_0^t \left( \sum_{\substack{\cM(\bj)=\pm a, \\ 2 \; \text{indices} \; > N}  e^{\rho a_1}} |z_{j_1} \ldots e^{\rho a_{k-1}} z_{j_{k-1}}| \right) \di s.
\end{align}
Ordering the multi-indices in such a way that $a_1$ and $a_2$ are the largest, 
and recalling that $z(t)$ is real, we obtain after a summation in $a>N$
\begin{align*}
\cR^N_\rho(z(t) &\leq \cR^N_\rho(z(0)) + 4k^3 \norm{Z} \int_0^t \left( \sum_{\substack{|j_1|,j_2| \geq N, \\ j_3,\ldots,j_k \in \cZ} } e^{\rho a_1} |z_{j_1} \ldots e^{\rho a_{k-1}} z_{j_{k-1}}| \right) \di s \\
&\leq \cR^N_\rho(z(0)) + 4k^3 \norm{Z} \int_0^t \cR^N_\rho(z(s))^2 \norm{z(s)}_\rho^{k-3} \di s.
\end{align*}
\end{proof}

\begin{remark} \label{expestrem}
Estimates \eqref{estanrem}-\eqref{estandata} will be fundamental for proving 
Theorem \ref{NekhThm}. Indeed, take $z(t)$ the solution of the Hamiltonian 
in $N$-normal form with corresponding initial datum $z_0$, and assume that 
$\norm{z_0}_\rho=R$. Hence, since $\cR^N_\rho(z_0)=\cO(R e^{-N\rho})$, estimates 
\eqref{estanrem}-\eqref{estandata} ensure that $\cR^N_\rho(z(t))$ remains of 
order $\cO(R e^{-N\rho})$ and that the norm of $z(t)$ remains of order $\cO(R)$ 
up to times $t$ of order $\cO(e^{N\rho})$.
\end{remark}

Next we exploit the non resonance condition \eqref{nonrescond} and the 
definition of normal forms to estimate the solution of a homological equation.

\begin{proposition} \label{homeqprop}
Assume that the non resonance condition \eqref{nonrescond} is fulfilled. 
Let $N$ be fixed, and let $Q$ be a homogeneous polynomial of degree $k$. 
Then the homological equation
\begin{align} \label{homeq}
\{\chi,H_0\} - Z &= Q
\end{align}
admits a polynomial solution $(\chi,Z)$ homogeneous of degree $k$ such that 
$Z$ is is $N$-normal form, and such that 
\begin{align} \label{homeqest}
\norm{Z}\leq \norm{Q}, \; &\; \norm{\chi} \leq \frac{N^{\tau k}}{\gamma} \norm{Q}.
\end{align}
\end{proposition}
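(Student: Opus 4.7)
The plan is to solve \eqref{homeq} coefficient by coefficient in the monomial basis. Writing $Q(z) = \sum_{\bj \in \cI_k} q_\bj z_\bj$, I would seek $\chi$ and $Z$ as homogeneous polynomials of degree $k$ of the same form, $\chi = \sum c_\bj z_\bj$ and $Z = \sum d_\bj z_\bj$.

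First I would compute the action of $\{\cdot,H_0\}$ on a monomial. Using \eqref{PoiBr} together with the fact that $H_0 = \sum_k \omega_k \xi_k \eta_k$ depends only on the actions, a short calculation yields $\{z_\bj, H_0\} = -i\,\Omega(\bj)\,z_\bj$, with $\Omega(\bj)$ as in \eqref{divisor}. The homological equation then decouples into the scalar equations
$$-i\,\Omega(\bj)\,c_\bj \; - \; d_\bj \; = \; q_\bj, \qquad \bj \in \cI_k.$$

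Next I would partition $\cI_k$ into the \emph{normal-form part} $\cN_k \cup \sI_k(N)$ and its complement, i.e.\ the non-resonant indices $\bj \in \cI_k$ with $\mu(\bj) \leq N$. On the normal-form part I set $c_\bj := 0$ and $d_\bj := -q_\bj$, so that $Z$ has exactly the structure required by Definition \ref{NFdef} and manifestly $\norm{Z} \leq \norm{Q}$. On the complementary part I set $d_\bj := 0$ and $c_\bj := i q_\bj/\Omega(\bj)$. This is where the non-resonance condition enters: applying Theorem \ref{nonrescondthm} with $r=k-2$ gives $|\Omega(\bj)| \geq \gamma / \mu(\bj)^{\tau(k-2)} \geq \gamma / N^{\tau k}$, whence $|c_\bj| \leq (N^{\tau k}/\gamma)\,|q_\bj|$, and taking the supremum yields the second bound in \eqref{homeqest}.

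Finally I would verify that the constructed $\chi$ and $Z$ belong to $\cP_k$. The zero-momentum property is automatic since we only use indices $\bj \in \cI_k$. Reality of the two polynomials amounts to $c_{\bar\bj} = \overline{c_\bj}$ and $d_{\bar\bj} = \overline{d_\bj}$, which follows from $q_{\bar\bj} = \overline{q_\bj}$ combined with the identities $\Omega(\bar\bj) = -\Omega(\bj)$ and $\Omega(\bj) \in \R$. There is no serious obstacle here: once Theorem \ref{nonrescondthm} has been established, the argument is essentially a division in the non-resonant sector and a reality check, and the only delicate bookkeeping is tracking the exponent $\tau(k-2)$, which is harmlessly majorized by $\tau k$ to obtain the form of \eqref{homeqest} stated in the proposition.
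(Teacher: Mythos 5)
Your proposal is correct and follows essentially the same route as the paper: expand $Q$, $\chi$, $Z$ in monomials, reduce \eqref{homeq} to the scalar relations $\pm i\,\Omega(\bj)\chi_\bj - Z_\bj = Q_\bj$, put the resonant or $\mu(\bj)>N$ part into $Z$ and divide by $\Omega(\bj)$ on the rest, where the non resonance condition \eqref{nonrescond} gives $|\Omega(\bj)|\geq \gamma/N^{\tau k}$. Your extra verification of reality and of membership in $\cP_k$ is a harmless (indeed welcome) addition that the paper leaves implicit.
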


\begin{proof}
Assume that $Q= \sum_{\bj \in \cI_k} Q_\bj z_\bj$, we look for 
$Z= \sum_{\bj \in \cI_k} Z_\bj z_\bj$ and $\chi= \sum_{\bj \in \cI_k} \chi_\bj z_\bj$ 
such that the homological equation \eqref{homeq} is satisfied.
Then Equation \eqref{homeq} can be rewritten in term of polynomial coefficients 
\begin{align*}
i \Omega(\bj) \chi_\bj - Z_\bj &= Q_\bj, \; \; j \in \cI_k,
\end{align*}
where $\Omega(\bj)$ is defined as in \eqref{divisor}. By setting
\begin{equation*}
\begin{cases}
Z_\bj = Q_\bj \text{ and } \chi_\bj=0, & \text{ if } \; \; j \in \cN_k \text{ or } \mu(\bj)>N, \\
Z_\bj = 0 \text{ and } \chi_\bj=\frac{Q_\bj}{i\Omega(\bj)}, & \text{ if } j \notin \cN_k \text{ or } \mu(\bj) \leq N,
\end{cases}
\end{equation*}
and by exploiting \eqref{nonrescond}, we can deduce \eqref{homeqest}.
\end{proof}

\section{Proof of the main Theorem}

We now prove Theorem \ref{NekhThm} by exploiting the results from the 
previous sections. The argument is inspired by the one in \cite{faou2013nekhoroshev}.

\subsection{Recursive equation}

In this subsection we want to construct a canonical transformation $\cT$ such 
that the Hamiltonian $(H_0+N) \circ \cT$ is in $N$-normal form, up to a small 
remainder term. We use Lie transform in order to generate the transformation 
$\cT$: hence we look for polynomials $\chi=\sum_{k\geq3}^r \chi_k$ and 
$Z=\sum_{k\geq3}^r Z_k$ in $N$-normal form and a smooth Hamiltonian $R$ such that
$\di^\alpha R(0)=0$ for all $\alpha\in\N^\cZ$ with $|\alpha|\geq r$, and such that
\begin{align} \label{HamTr}
(H_0+N) \circ \Phi^1_\chi &= H_0+Z+R.
\end{align}
The exponential estimate will be obtained by optimizing the choice of $r$ and $N$. \\

We recall that if $\chi$ and $K$ are two Hamiltonian, we have that for all 
$k \geq 0$
\begin{align*}
\frac{\di^k}{\di t^k} (K \circ \Phi^t_\chi) &= \{\chi,\ldots \{\chi,K\} \ldots \}(\Phi^t_\chi) = (ad_\chi^k K)(\Phi^t_\chi),
\end{align*}
where $ad_\chi K:=\{\chi,K\}$. On the other hand, if $K$ and $L$ are homogeneous 
polynomial of degree respectively $k$ and $l$, than $\{K,L\}$ is a homogeneous 
polynomial of degree $k+l-2$. Thus, by using Taylor formula
\begin{align} \label{TayRem}
(H_0+N) \circ \Phi^1_\chi - (H_0+N) &= \sum_{k=0}^{r-3} \frac{1}{(k+1)!} ad_\chi^k(\{\chi,H_0+N\}) + \cO_r,
\end{align}
where by ``$+ \cO_r$'' we mean ``up to a smooth function $R$ satisfying $\d^\alpha R(0)=0$ for all $\alpha \in \N^\cZ$ with $|\alpha| \geq r$. Now, recall the 
following relation
\begin{align*}
\left( \sum_{k=0}^{r-3} \frac{B_k}{k!} \zeta^k \right) \left( \sum_{k=0}^{r-3} \frac{1}{(k+1)!} \zeta^k \right) &= 1 + \cO(|\xi|^{r-2}),
\end{align*}
where $B_k$ are the Bernoulli numbers defined by the expansion of the 
generating function $\frac{z}{e^z-1}$. We just recall that there exists 
$K>0$ such that $|B_k| \leq k! \; K^k$ for all $k$. \\

Hence, defining the two differential operators 
\begin{align*}
\cA_r &:= \sum_{k=0}^{r-3} \frac{1}{(k+1)!} ad_\chi^k, \; \\
\cB_r &:= \sum_{k=0}^{r-3} \frac{B_k}{k!} ad_\chi^k,
\end{align*}
we obtain 
\begin{align*}
\cB_r\cA_r &= id + \cC_r,
\end{align*}
where $\cC_r$ is a differential operator satisfying $\cC_r\cO_3=\cO_r$. 
Applying $\cB_r$ to both sides of Eq. \eqref{TayRem} we get 
\begin{align*}
\{\chi,H_0+N\} &= \cB_r (Z-N) +\cO_r.
\end{align*}
Plugging the decomposition in homogeneous polynomials of $\chi$, $Z$ and $N$ 
in the last equation and comparing the terms with the same degree, we have 
the following recursive equations
\begin{align} \label{receq}
\{\chi_m,H_0\} - Z_m &= Q_m, \; \; m=3,\ldots,r,
\end{align}
where
\begin{align}
Q_m &= - N_m - \sum_{k=3}^{m-1} \{\chi_k,N_{m+2-k}\} \nonumber \\
&+ \sum_{k=1}^{m-3} \frac{B_k}{k!} \sum_{ \substack{l_1+\ldots+l_{k+1}=m+2k \\ 3 \leq l_i \leq m-k} } ad_{\chi_{l_1}} \ldots ad_{\chi_{l_k}}(Z_{l_{k+1}}-N_{l_{k+1}}). \label{mterm}
\end{align}

We point out that in \eqref{mterm} the condition $l_i \leq m-k$ is a 
consequence of $l_i \geq 3$ and $l_1+\ldots+l_{k+1}=m+2k$. 
Once these recursive equations are solved, we can define the remainder term 
$R=(H_0+N) \circ \Phi^1_\chi - (H_0+Z)$. By construction we have that $R$ is 
analytic on a neighbourhood of the origin in $\cL_\rho$, and that $R=\cO_r$. 
Hence, by Taylor formula
\begin{align}
R &= \sum_{m \geq r+1} \sum_{k=1}^{m-3} \frac{1}{k!} \sum_{ \substack{l_1+\ldots+l_k=m+2k \\ 3 \leq l_i \leq m-k} }  ad_{\chi_{l_1}} \ldots ad_{\chi_{l_k}} H_0 \nonumber \\
&+ \sum_{m \geq r+1} \sum_{k=0}^{m-3} \frac{1}{k!} \sum_{ \substack{l_1+\ldots+l_{k+1}=m+2k \\ 3 \leq l_1+\ldots+l_k \leq r \\ 3 \leq l_{k+1}} }  ad_{\chi_{l_1}} \ldots ad_{\chi_{l_k}} N_{l_{k+1}}. \label{remBNF}
\end{align}

\begin{lemma} \label{itlemma}
Assume that the non resonance condition \eqref{nonrescond} is satisfied. 
Let $r$ and $N$ be fixed. 
For $m=3,\ldots,r$ there exists homogeneous polynomials $\chi_m$ and $Z_m$ 
of degree $m$ that solve \eqref{receq}, with $Z_m$ in $N$-normal form and 
such that 
\begin{align} \label{itestm}
\norm{ \chi_m } + \norm{ Z_m } &\leq (K m N^\tau)^{m^2},
\end{align}
where $K$ is a positive constant that does not depend on $r$ or $N$.
\end{lemma}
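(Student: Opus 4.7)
The plan is to establish \eqref{itestm} by strong induction on $m \in \{3,\ldots,r\}$, applying Proposition \ref{homeqprop} to the recursive equation \eqref{receq} at each step and using the inductive hypothesis to bound $\norm{Q_m}$ via the Poisson bracket estimate \eqref{polpoiest}. For the base case $m=3$, the formula \eqref{mterm} collapses to $Q_3 = -N_3$, and Section \ref{nonlinsec} gives $\norm{N_3} \leq M R_0^{-3}$; Proposition \ref{homeqprop} then produces $\chi_3$ and $Z_3$ satisfying $\norm{\chi_3} + \norm{Z_3} \leq (N^\tau/\gamma + 1)\, M R_0^{-3}$, which is bounded by $(3KN^\tau)^9$ as soon as $K$ is chosen large enough.

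For the inductive step, I would estimate the two contributions to $Q_m$ separately. The first sum in \eqref{mterm} contributes terms $\{\chi_k, N_{m+2-k}\}$ with $3 \leq k \leq m-1$, which by \eqref{polpoiest} and the inductive hypothesis are bounded by $2k(m+2-k)(KkN^\tau)^{k^2} M R_0^{-(m+2-k)}$. For the iterated brackets $ad_{\chi_{l_1}}\cdots ad_{\chi_{l_k}}(Z_{l_{k+1}} - N_{l_{k+1}})$ appearing in the second sum, I would iterate \eqref{polpoiest}: since the intermediate polynomials have degree at most $m+2k$, the norm is controlled by $(2m)^{2k}\prod_{i=1}^{k}\norm{\chi_{l_i}} \cdot (\norm{Z_{l_{k+1}}} + \norm{N_{l_{k+1}}})$, while the Bernoulli factors are handled by $|B_k|/k! \leq K_0^k$, producing only a harmless geometric correction.

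The key combinatorial observation is the bound
\begin{align*}
\sum_{i=1}^{k+1} l_i^2 \leq (m-k)^2 + 9k
\end{align*}
valid whenever $l_i \geq 3$ and $\sum_i l_i = m+2k$, obtained by maximizing the sum of squares at the extremal configuration where one index equals $m-k$ and the remaining $k$ indices equal $3$. Plugging the inductive bounds $(K l_i N^\tau)^{l_i^2}$ into the above estimates and then applying Proposition \ref{homeqprop} to solve for $\chi_m$ and $Z_m$ introduces an additional factor of $N^{\tau m}/\gamma$, so that the resulting exponent of $(KmN^\tau)$ in the bound for $\norm{\chi_m} + \norm{Z_m}$ is at most $(m-k)^2 + 9k + m$. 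A direct calculation shows $(m-k)^2 + 9k + m \leq m^2$ for every $k \geq 1$ and all $m$ sufficiently large, which is exactly what is needed to close the induction; the finitely many small values of $m$ are absorbed by enlarging $K$.

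The main obstacle is not any single analytical estimate but the combinatorial bookkeeping: the prefactors $(2m)^{2k}$ from iterated brackets, the Bernoulli growth, the multiplicity of admissible tuples $(l_1,\ldots,l_{k+1})$ with fixed sum $m+2k$, and the small-divisor loss $N^{\tau m}$ from each homological equation must all be simultaneously absorbed into the ansatz $(KmN^\tau)^{m^2}$. The quadratic exponent $m^2$ is precisely what is required by the convex-extremal identity above; it is this superexponential ansatz that, once $N$ and $r$ are optimized, will yield the $e^{-\sigma_\rho |\log R|^{\beta+1}}$ timescale appearing in Theorem \ref{NekhThm}.
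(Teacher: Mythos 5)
Your proof follows essentially the same route as the paper's: induction on $m$ via Proposition \ref{homeqprop}, Poisson-bracket estimates \eqref{polpoiest} and Bernoulli bounds for the two sums in \eqref{mterm}, the tuple count, and the same extremal observation that $l_1^2+\cdots+l_{k+1}^2$ is maximized at $l_1=\cdots=l_k=3$, $l_{k+1}=m-k$, giving $(m-k)^2+9k$. The one caveat --- that for small $m$ (e.g.\ $m=4$, $k=1$) the exponent $(m-k)^2+9k+m$ exceeds $m^2$, an excess in powers of $N^\tau$ which cannot literally be absorbed by enlarging the $N$-independent constant $K$ --- is shared with the paper's own argument, so your proposal is faithful to it.
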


\begin{proof}
We define $\chi_m$ and $Z_m$ by induction, via Proposition \ref{homeqprop}. 
Note that \eqref{itestm} is trivially satisfied for $m=3$, provided that $K$ is 
sufficiently large. Estimate \eqref{homeqest}, combined with \eqref{polpoiest} 
and with the classical estimate on the Bernoulli numbers, we have that for all 
$m \geq 3$
\begin{align*}
\frac{\gamma}{N^{\tau r}} \norm{\chi_m} + \norm{Z_m} &\stackrel{\eqref{mterm}}{\leq} \norm{N_m} + 2\sum_{k=3}^{m-1} k(m+2-k) \norm{N_{m+2-k}} \norm{\chi_k} \\
&+ 2 \sum_{k=1}^{m-3} (K m)^k \sum_{ \substack{l_1+\ldots+l_{k+1}=m+2k \\ 3 \leq l_i \leq m-k} } l_1 \norm{\chi_{l_1}} \ldots l_k \norm{\chi_{l_k}} \norm{ Z_{l_{k+1}}-N_{l_{k+1}} },
\end{align*}
for some constant $K>0$. Now we set $\beta_m:=m(\norm{\chi_m}+\norm{Z_m})$; 
using the fact that $\norm{N_m} \leq M/R_0^m$ (see the end of Sec. 
\ref{nonlinsec}), we get
\begin{align*}
\beta_m &\leq \beta_m^{(1)} + \beta_m^{(2)}, \\
\beta_m^{(1)} &:= (K N^\tau)^m \; m^3 \sum_{k=3}^{m-1} \beta_k, \\
\beta_m^{(2)} &:= N^{\tau m} (K m)^{m-1} \sum_{k=1}^{m-3} \sum_{ \substack{l_1+\ldots+l_{k+1}=m+2k \\ 3 \leq l_i \leq m-k} } \beta_{l_1} \ldots \beta_{l_k} (\beta_{l_{k+1}} + \norm{N_{l_{k+1}}}),
\end{align*}
where $K$ depends on $M$, $R_0$, $\gamma$. We have to prove by recurrence 
that $\beta_m \leq (K m N^\tau)^{m^2}$, for $m \geq 3$: this is trivially true 
for $m=3$, by choosing a sufficiently large $K$. Now assume that 
$\beta_j \leq (K j N^\tau)^{j^2}$ for $j=3,\ldots,m-1$; we get 
\begin{align*}
\beta_m^{(1)} \leq (K N^\tau)^m m^4 (K m N^\tau)^{(m-1)^2} &\leq (K m N^\tau)^{m^2-m+1} \leq \frac{1}{2} (K m N^\tau)^{m^2},
\end{align*}
for $m \geq 4$, provided that $K>2$. 
On the other hand, since $\norm{N_m} \leq M/R_0^m$, we can assume that 
$\norm{N_{l_{k+1}}} \leq \beta_{l_{k+1}}$, hence 
\begin{align*}
\beta_m^{(2)} \leq N^{\tau m} (K m)^{m-1} \sum_{k=1}^{m-3} \sum_{ \substack{l_1+\ldots+l_{k+1}=m+2k \\ 3 \leq l_i \leq m-k} } (K N^\tau (m-k))^{l_1^2+\ldots+l_{k+1}^2}.
\end{align*}
Observe that the maximum of $l_1^2+\ldots+l_{k+1}^2$ when $l_1+\ldots+l_{k+1}=m+2k$ and $3 \leq l_i \leq m-k$ is obtained for $l_1=\ldots=l_k=3$, $l_{k+1}=m-k$, and 
that its value at the maximum is $(m-k)^2+9k$. Furhermore, 
$|\{ (l_1,\ldots l_k) \in \cZ^k : l_1+\ldots+l_{k+1}=m+2k, 3 \leq l_i \leq m-k \}|$ is smaller than $m^{k+1}$. Therefore
\begin{align*}
\beta_m^{(2)} &\leq \frac{1}{2} (K m N^\tau)^{m^2}
\end{align*}
for any $m \geq 4$, and for a sufficiently large $K$.
\end{proof}

\subsection{Normal Form Theorem}

Given a positive $R_0$, we set $B_\rho(R_0):=\{ z \in \cL_\rho : \norm{z}_\rho < R_0\}$.

\begin{theorem} \label{BNFthm}
Let $N$ be analytic on a ball $B_\rho(R_0)$ for some $R_0>0$ and $\rho>0$. 
Fix $\beta < 1$ and $M>1$, 
and assume that the non resonance condition \eqref{nonrescond} is fulfilled.
Then there exist a sufficiently small $\epsilon_0>0$ 
and a positive $\sigma>0$ such that the following holds: 
for all $\epsilon < \epsilon_0$ there exist a polynomial $\chi$, 
a polynomial $Z$ in $|\log \epsilon|^{1+\beta}$-normal form, and 
a Hamiltonian $R$ analytic on $B_\rho(M\epsilon)$ such that
\begin{align} \label{TrHam}
(H_0+N) \circ \Phi^1_\chi &= H_0+Z+R,
\end{align}
and such that for all $z \in B_\rho(M \epsilon)$
\begin{align}
\norm{X_Z(z)}_\rho + \norm{X_\chi(z)}_\rho &\leq 2\epsilon^{3/2}, \label{BNFest1} \\
\norm{X_R(z)}_\rho&\leq \epsilon e^{-\frac{1}{4} |\log \epsilon|^{1+\beta}}. \label{BNFest2}
\end{align}
\end{theorem}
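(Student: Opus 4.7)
The plan is to construct the canonical transformation $\Phi^1_\chi$ by solving the recursive homological equations \eqref{receq} up to a degree $r$ that will be optimized at the end, and then to estimate the three pieces $\chi$, $Z$ and $R$ as functions of $r$, $N$ and $\epsilon$ using the polynomial machinery of Sections \ref{PolSec} and \ref{BNFSec}.

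First I would fix $N := \lfloor |\log \epsilon|^{1+\beta} \rfloor$, which is the cutoff threshold required in the statement, and apply Lemma \ref{itlemma} (which relies on the non resonance condition \eqref{nonrescond}) to construct polynomials $\chi_m$ and $Z_m$ of degree $m$, with $Z_m$ in $N$-normal form, for $m = 3, \dots, r$, satisfying
\begin{align*}
\| \chi_m \| + \| Z_m \| \leq (K m N^\tau)^{m^2}.
\end{align*}
Setting $\chi = \sum_{m=3}^{r}\chi_m$, $Z = \sum_{m=3}^{r} Z_m$ and defining $R := (H_0+N)\circ \Phi^1_\chi - (H_0+Z)$ gives \eqref{TrHam} by construction, while the Taylor expansion \eqref{remBNF} provides the explicit series representation of $R$ that is the starting point for the bound \eqref{BNFest2}.

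Next, using Proposition \ref{polkprop} to pass from polynomial norms to vector field norms, for any $z \in B_\rho(M\epsilon)$ I get
\begin{align*}
\| X_\chi(z) \|_\rho + \| X_Z(z) \|_\rho \leq 4 \sum_{m=3}^{r} m \, (K m N^\tau)^{m^2} (M \epsilon)^{m-1}.
\end{align*}
The $m=3$ term is of order $\epsilon^{2}$, and the ratio of successive terms is controlled by $(K m N^\tau)^{m^2}(M\epsilon)^{m-2}$, which is $\leq \tfrac12$ for all $m \leq r$ provided $r^2 \log(K r N^\tau) \leq (r-2)|\log \epsilon|$, i.e.\ $r \log(K r N^\tau) \leq \tfrac12 |\log \epsilon|$. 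The sum is then geometric and bounded by $\epsilon^{3/2}$ for $\epsilon$ sufficiently small, giving \eqref{BNFest1}.

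For the remainder I would bound each iterated Poisson bracket in \eqref{remBNF} by repeated application of \eqref{polpoiest} together with Proposition \ref{polkprop}, noting as in the proof of Lemma \ref{itlemma} that the maximum of $l_1^2 + \cdots + l_{k+1}^2$ under $l_1+\cdots+l_{k+1}=m+2k$ and $l_i \geq 3$ is at most $m^2$, so each summand of degree $m \geq r+1$ contributes at most $(K m N^\tau)^{m^2}(M\epsilon)^{m-1}$. The tail summed from $m=r+1$ onward is dominated by its first term, of size essentially $(K r N^\tau)^{r^2}(M\epsilon)^{r}$, and to make this smaller than $\epsilon \, e^{-|\log \epsilon|^{1+\beta}/4}$ I need
\begin{align*}
(r-1)\,|\log \epsilon| - r^2 \log(K r N^\tau) \geq \tfrac14 \, |\log \epsilon|^{1+\beta}.
\end{align*}
With $N = |\log \epsilon|^{1+\beta}$ this is satisfied by choosing $r$ of the order $|\log \epsilon|^\beta$, which is compatible with the previous constraint $r \log(K r N^\tau) \leq \tfrac12 |\log \epsilon|$ precisely because $\beta < 1$.

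The hard part is the simultaneous optimization just described: the parameter $r$ must be small enough that the constants $(K m N^\tau)^{m^2}$ stay tamer than the smallness $\epsilon^{m-1}$ at every step $m \leq r$, and large enough that the remainder tail is exponentially small in $|\log \epsilon|^{1+\beta}$. The window $|\log \epsilon|^{\beta} \lesssim r \lesssim |\log \epsilon|/\log|\log \epsilon|$ in which both conditions hold is nonempty exactly when $\beta < 1$, and its existence together with the identification of $\sigma = 1/4$ as the exponent constant is the main bookkeeping step; everything else is the standard Lie transform calculus already assembled in Sections \ref{PolSec}--\ref{BNFSec}.
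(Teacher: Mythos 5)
Your construction and your estimates for $\chi$ and $Z$ follow the paper's proof essentially verbatim: Lemma \ref{itlemma} with $N=|\log\epsilon|^{1+\beta}$ and $r$ of order $|\log\epsilon|^{\beta}$, then Proposition \ref{polkprop} to pass from polynomial norms to vector fields on $B_\rho(M\epsilon)$; that part, and the final numerology for \eqref{BNFest1}, is sound. The gap is in the remainder estimate \eqref{BNFest2}. The bound you assign to a degree-$m$ summand of \eqref{remBNF}, namely $(K m N^\tau)^{m^2}(M\epsilon)^{m-1}$ obtained by maximizing $l_1^2+\cdots+l_{k+1}^2$, cannot be summed over $m\geq r+1$: for fixed $\epsilon$ and $N$ one has $m^2\log(K m N^\tau)-(m-1)|\log(M\epsilon)|\to+\infty$ as $m\to\infty$, so the terms of your majorant series blow up and the claim that the tail ``is dominated by its first term'' is unjustified as stated. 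What makes the tail summable --- and what the paper actually uses --- is that every $\chi_{l_i}$ appearing in \eqref{remBNF} has degree $l_i\leq r$, so with the choice \eqref{Nrchoice} each factor satisfies $\norm{\chi_{l_i}}\leq\epsilon^{-l_i/8}$ as in \eqref{estchik}; since $l_1+\cdots+l_k\leq m+2k\leq 3m$, the degree-$m$ contribution is at most of the form $m(Kr)^{3m}\epsilon^{-(m+2k)/8}(M\epsilon)^{m-1}\lesssim m^2(Kr)^{3m}\epsilon^{m/2}$, i.e.\ only exponentially large in $m$ and hence genuinely geometric, with first term $(Kr)^{3r}\epsilon^{r/2}\leq\epsilon e^{-\frac14|\log\epsilon|^{1+\beta}}$. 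So the intermediate majorization must be exponential in $m$, not of the super-exponential form $(K m N^\tau)^{m^2}$.

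A second, related point: you propose to bound the terms $ad_{\chi_{l_1}}\cdots ad_{\chi_{l_k}}H_0$ in \eqref{remBNF} by ``repeated application of \eqref{polpoiest}''. This is not legitimate for the innermost bracket, because $H_0$ has unbounded coefficients $\omega_j$ and does not belong to $\cP_2$ (indeed $H_0\notin\cH_\rho$, as remarked in Section \ref{HamSetting}), so \eqref{polpoiest} gives no bound on $\{\chi_{l_k},H_0\}$. The paper first replaces $ad_{\chi_{l_k}}H_0=Z_{l_k}+Q_{l_k}$ via the homological equation \eqref{receq}, whose norm is controlled through \eqref{mterm} and Lemma \ref{itlemma}, and only then iterates \eqref{polpoiest}. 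Both issues are fixable along these lines, but as written your proof of \eqref{BNFest2} does not go through.
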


\begin{proof}
Using Lemma \ref{itlemma}, we can construct for all $r$ and $N$ polynomial 
Hamiltonians
\begin{align*}
\chi(z) &= \sum_{k=3}^r \chi_k(z), \\
Z(z) &= \sum_{k=3}^r Z_k(z),
\end{align*}
with $Z$ in $N$-normal form such that \eqref{TrHam} holds with $R=\cO_r$. 
Now fix $\epsilon$, and choose 
\begin{align} \label{Nrchoice}
N:=N(\epsilon)=|\log\epsilon|^{1+\beta},\; &\; r:=r(\epsilon)=|\log\epsilon|^\beta.
\end{align}
\eqref{Nrchoice} is motivated by the fact that we can control the error 
induced by $Z$ by Remark \ref{expestrem}, while the error induced by $R$ 
can be estimated by Lemma \ref{itlemma}. Indeed, in this way we have
\begin{align}
\norm{\chi_k} &\stackrel{\eqref{itestm}}{\leq} (K k N^\tau)^{k^2} \nonumber \\
&\leq \exp( k(\tau k(1+\beta) \log|\log\epsilon| + k \log(K k)) ) \nonumber \\
&\stackrel{k \leq r}{\leq} \exp( k(\tau r(1+\beta) \log|\log\epsilon| + r \log(K r)) ) \nonumber \\
&\stackrel{\eqref{Nrchoice}}{\leq} \exp( k|\log\epsilon| (\tau |\log\epsilon|^{\beta-1} (1+\beta) \log|\log\epsilon| + |\log\epsilon|^{\beta-1} \log(K|\log\epsilon|^\beta)) ) \nonumber \\
&\stackrel{\beta<1}{\leq} \epsilon^{-k/8}, \label{estchik}
\end{align}
provided that $\epsilon_0$ is sufficiently small. 
Hence, by Proposition \ref{polkprop} we get that for all $z\in B_\rho(M\epsilon)$
\begin{align*}
|\chi_k(z)| &\leq \epsilon^{-k/8} (M\epsilon)^k = M^k \epsilon^{7k/8},
\end{align*}
and we can deduce that
\begin{align*}
|\chi(z)| &\leq \sum_{k \geq 3} M^k \epsilon^{7k/8} \leq \epsilon^{3/2},
\end{align*}
provided that $\epsilon_0$ is sufficiently small. Similarly, we have 
\begin{align*}
\norm{X_{\chi_k(z)}}_\rho &\leq 2k \epsilon^{-k/8} (M\epsilon)^{k-1} = 2kM^{k-1} \epsilon^{7k/8 - 1}, \; \; 3 \leq k \leq r, \\
\norm{X_\chi(z)}_\rho &\leq \sum_{k \geq 3}2kM^{k-1} \epsilon^{7k/8 - 1} \sleq  \epsilon^{-1} \epsilon^{21/8} \leq \epsilon^{3/2},
\end{align*}
provided that $\epsilon_0$ is sufficiently small. Similar estimates hold also 
for $Z$, and therefore we can deduce \eqref{BNFest1}. \\

In order to estimate the remainder, we recall that by \eqref{receq} 
$ad_{\chi_{l_k}}H_0 = Z_{l_k} + Q_{l_k}$, therefore by \eqref{mterm} we obtain 
\begin{align*}
\norm{ad_{\chi_{l_k}}H_0} &\leq (K k N^\tau)^{l_k^2} \stackrel{\eqref{estchik}}{\leq} \epsilon^{-l_k/8}.
\end{align*}
Thus, we can exploit \eqref{remBNF} and the fact that 
$\norm{N_{l_{k+1}}} \leq M/R_0^{l_{k+1}}$ in order to get
\begin{align*}
\norm{X_R(z)}_\rho &\stackrel{\eqref{remBNF}}{\leq} \sum_{m \geq r+1} \sum_{k=0}^{m-3} m (K r)^{3m} \epsilon^{-\frac{m+2k}{8}} \epsilon^{m-1} \\
&\leq \sum_{m \geq r+1} m^2 (K r)^{3m} \epsilon^{m/2} \\
&\leq (K r)^{3r} \epsilon^{r/2} \\
&\stackrel{\eqref{Nrchoice}}{\leq} \epsilon e^{-\frac{1}{4} |\log\epsilon|^{1+\beta} },
\end{align*}
provided that $\epsilon_0$ is sufficiently small.
\end{proof}

Now we can prove Theorem \ref{NekhThm} by applying Theorem \ref{BNFthm}.

\begin{proof}[Proof of Thm. \ref{NekhThm}]
Let $(\psi_0,\bar\psi_0) \in \cA_{\rho}$, $|(\psi_0,\bar\psi_0)|_\rho=R$, and 
denote by $z(0)$ the corresponding coefficients, which belong by Lemma 
\ref{anlemma} to $\cL_{3\rho/4}$, and satisfy
\begin{align*}
\norm{z(0)}_{3\rho/4} &\leq \frac{K_\rho}{4} R,
\end{align*}
where $K_\rho := \frac{2^3}{1-e^{-\rho/4}}$. 

Let $z(t)$ be the local solution in $\cL_{\rho/2}$ of the Hamiltonian system 
asoociated to $H_0+N$. Let $\chi$, $Z$ and $R$ be given by Theorem \ref{BNFthm} 
with $M=K_\rho$, and let $y(t)=\Phi^1_\chi(z(t))$. We recall that 
$\chi(z)=\cO(\norm{z}^3)$, that $\Phi^1_\chi$ is close to the identity, 
$\Phi^1_\chi(z)=z+\cO(\norm{z}^2)$: hence, for sufficiently small $R$ we get 
\begin{align*}
\norm{y(0)}_{3\rho/4} &\leq \frac{K_\rho}{2} R,
\end{align*}
and in particular
\begin{align*}
\cR^N_\rho(y(0)) &\leq \frac{K_\rho}{2} R e^{-\frac{\rho}{4} N} \leq \frac{K_\rho}{2} R e^{-\sigma N},
\end{align*}
where $\sigma=\sigma_\rho \leq \frac{\rho}{4}$. 

Now let $T_R$ be the maximal time $T$ such that 
\begin{align*}
\cR^N_\rho(y(t)) &\leq K_\rho R e^{-\sigma N}, \; \; |t| \leq T, \\
\norm{y(t)}_\rho &\leq K_\rho R,  \; \; |t| \leq T.
\end{align*}
By construction we have 
\begin{align*}
y(t) &= y(0) + \int_0^t X_{H_0+Z}(y(s)) \di s + \int_0^t X_R(y(s)) \di s,
\end{align*}
hence by using \eqref{estandata} for the second term and \eqref{BNFest2} 
for the third term, we obtain that for $|t| < T_R$
\begin{align}
\cR^N_\rho(y(t)) &\leq \frac{1}{2} K_\rho R e^{-\sigma N} 
+ 4|t| \sum_{k=3}^r \norm{Z_k} k^3 (K_\rho R)^{k-1} e^{-2\sigma N} 
+ |t| R e^{-\frac{1}{4} |\log\epsilon|^{1+\beta}} \nonumber \\
&\leq \left( \frac{1}{2} 
+ 4|t| \sum_{k=3}^r \norm{Z_k} k^3 (K_\rho R)^{k-2} e^{-\sigma N} 
+ |t| R e^{-\frac{1}{8} |\log\epsilon|^{1+\beta}}
\right) K_\rho R e^{-\sigma N}, \label{estrem2}
\end{align}
where in the last inequality we have used that $\sigma=\min(1/8,\rho/4)$ 
and that $N=|\log\epsilon|^{1+\beta}$. Using Lemma \ref{itlemma}, we get 
\begin{align*}
\cR^N_\rho(y(t)) &\leq \left( \frac{1}{2} + K|t| R e^{-\sigma N} \right) K_\rho R e^{-\sigma N},
\end{align*}
and thus, for sufficiently small $R$,
\begin{align}
\cR^N_\rho(y(t)) &\leq K_\rho R e^{-\sigma N}, \; \; |t| \leq \min(T_\rho,e^{\sigma N}), \label{estremfinal}
\end{align}
and similarly
\begin{align}
\norm{y(t)}_\rho &\leq K_\rho R, \; \; |t| \leq \min(T_\rho,e^{\sigma N}). \label{estsolfinal}
\end{align}

Now, observe that by \eqref{estremfinal} and \eqref{estsolfinal} we have 
$T_R \geq e^{\sigma N}$; in particular, we have
\begin{align*}
\norm{z(t)}_\rho &\leq 2K_\rho R, \; \; |t| \leq e^{\sigma N}=e^{ -\sigma |\log\epsilon|^{1+\beta} }.
\end{align*}
Using Lemma \ref{anlemma}, we obtain \eqref{expest1} by setting 
$K = \frac{2^5}{(1-e^{-\rho/4})^2}$. \\

Estimate \eqref{expest2} is a consequence of Theorem \ref{BNFthm} and 
\eqref{estandata}: indeed, it just suffices to remark that $z(t)$ is 
$R^2$-close to $y(t)$, which in turn is almost invariant, since
\begin{align*}
\sum_{k \geq 1} e^{\rho|k|} ||y_k(t)|-|y_k(0)|| &\stackrel{\eqref{estandata}}{\leq} 
4 |t| \sum_{k=3}^r \norm{Z_k} k^3 (K_\rho R)^{k-1} e^{-2\sigma N} + |t| R e^{-\frac{1}{4} |\log\epsilon|^{1+\beta} },
\end{align*}
and arguing as in \eqref{estrem2} we can finally deduce \eqref{expest2}.
\end{proof}

\bibliography{P_NLKG_Nek2017}
\bibliographystyle{alpha}

\end{document}